\documentclass[english,final,
reqno]{amsart}   
\usepackage[T1]{fontenc}
\usepackage[latin1]{inputenc}
\usepackage[english]{babel}
\usepackage{amsmath,amsfonts,amssymb,amsthm}
\usepackage[notcite,notref]{showkeys}
\usepackage{mathrsfs}
\usepackage{graphicx}
\usepackage{fixme}                     

\parindent=1em

\newtheorem{theorem}{Theorem}[section]
\newtheorem{proposition}[theorem]{Proposition}
\newtheorem{lemma}[theorem]{Lemma}
\newtheorem{corollary}[theorem]{Corollary}
\theoremstyle{definition}
\newtheorem{definition}[theorem]{Definition}
\newtheorem{remark}[theorem]{Remark}

\newtheorem{observation}[theorem]{Observation}

\renewcommand{\phi}{\varphi}
\renewcommand{\epsilon}{\varepsilon}

\newcommand{\Z}{\mathbb Z}

\newcommand{\csa}{$C^*$-al\-ge\-bra}

\newcommand{\shom}{$*$-homo\-mor\-phism}
\newcommand{\siso}{$*$-iso\-mor\-phism}

\renewcommand{\subset}{\subseteq}

\renewcommand{\setminus}{\backslash}
\DeclareMathOperator{\id}{id}

\DeclareMathOperator{\KK}{KK}
\newcommand{\kk}{\mathfrak{KK}}
\DeclareMathOperator{\K}{K}

\DeclareMathOperator{\Sus}{S}
\DeclareMathOperator{\Cone}{C}

\newcommand{\tensor}[2]{{#1}\otimes{#2}}
\DeclareMathOperator{\im}{im}

\DeclareMathOperator{\Hom}{Hom}
\DeclareMathOperator{\Ext}{Ext}

\newcommand{\Op}{\mathcal O}
\newcommand{\LC}{\mathbb{LC}}
\DeclareMathOperator{\FK}{FK}
\newcommand{\NT}{\mathcal{NT}}
\newcommand{\B}{\mathcal B}
\DeclareMathOperator{\Prim}{Prim}
\newcommand{\Ideals}{\mathbb I}
\DeclareMathOperator{\Ch}{Ch}
\DeclareMathOperator{\pd}{pd}

\newcommand{\onto}{\twoheadrightarrow}
\newcommand{\into}{\hookrightarrow}
\newcommand{\op}{\mathrm{op}}
\DeclareMathOperator{\cok}{coker}


\newfont{\cone}{cone}
\renewcommand{\Cone}{\textrm{{\cone 0}\ \ }}

\input xy
\xyoption{all}

\title{Filtrated $\K$-theory for real rank zero $C^*$-algebras}
\author{Sara Arklint}
\address{Department of Mathematical Sciences, University of Copenhagen, Uni\-versi\-tets\-parken~5, DK-2100 Copenhagen, Denmark}
\email{arklint@math.ku.dk}

\author{Gunnar Restorff}
\address{Faculty of Science and Technology, University of Faroe Islands, N\'oat\'un~3, FO-100~T\'orshavn, Faroe Islands}
\email{gunnarr@setur.fo}

\author{Efren Ruiz}
\address{Department of Mathematics, University of Hawaii, Hilo, 200 W.~Kawili St., Hilo, Hawaii, 96720-4091 USA}
\email{ruize@hawaii.edu}

\begin{document}

\bibliographystyle{alpha}

\begin{abstract}
Using Kirchberg $\KK_{X}$-classification of purely infinite, separable, stable, nuclear \csa s with finite primitive ideal space, Bentmann showed that filtrated $\K$-theory classifies purely infinite, separable, stable, nuclear \csa s that satisfy that all simple subquotients are in the bootstrap class and that the primitive ideal space is finite and of a certain type, referred to as accordion spaces.  This result generalizes the results of Meyer-Nest involving finite linearly ordered spaces.  Examples have been provided, for any finite non-accordion space, that isomorphic filtrated $\K$-theory does not imply $\KK_{X}$-equivalence for this class of \csa s.  As a consequence, for any non-accordion space, filtrated $\K$-theory is not a complete invariant for purely infinite, separable, stable, nuclear \csa s that satisfy that all simple subquotients are in the bootstrap class.

In this paper, we investigate the case for real rank zero \csa s and four-point primitive ideal spaces, as this is the smallest size of non-accordion spaces.  Up to homeomorphism, there are ten different connected $T_{0}$-spaces with exactly four points.  We show that filtrated $\K$-theory classifies purely infinite, real rank zero, separable, stable, nuclear \csa s that satisfy that all simple subquotients are in the bootstrap class for eight out of ten of these spaces.
\end{abstract}

\maketitle

\section{Introduction}


The \csa{} classification programme initiated by G.~A.~Elliott in the early seventies has seen a rapid development during the past 20 years. 
%
The notion of real rank zero \csa s introduced by G.~K.~Pedersen and L.~G.~Brown in the late eighties has turn out to be of particular interest in connection with classification of \csa s. Until the mid-nineties most results were concerned with the stably finite algebras, when people such as M.~R\o{}rdam, N.~C.~Phillips, E.~Kirchberg and D.~Huang classified some purely infinite, nuclear, separable \csa s in the bootstrap class. All these had finitely many ideals --- in fact, almost all cases were either the simple case or the one non-trivial ideal case. D.~Huang was also able to classify purely infinite Cuntz-Krieger algebras with finite $\K$-theory (implying that all the $\K_1$-groups are zero). In contrast to the stably finite case, the positive cone of purely infinite \csa s carries no extra information, so it was clear from the beginning, that to classify non-simple purely infinite \csa s one needs to come up with a new invariant, which also encodes the ideal structure and the $\K$-theory of all ideals and quotients. 

The main ingredients of the proof of N.~C.~Phillips and E.~Kirchberg were the UCT of J.~Rosenberg and C.~Schochet and a result saying that every $\KK$-equivalence between (simple, purely infinite, stable, nuclear, separable) \csa s can be lifted to a \siso{} between the algebras. Shortly after, E.~Kirchberg generalized this result to $X$-equivariant $\KK$-theory, where $X$ is (homeomorphic to) the primitive ideal space of the \csa . The only ingredient thus missing to classify purely infinite, nuclear, separable, stable \csa s seemed to be to find the right invariant and prove a UCT for $X$-equivariant $\KK$-theory with this new invariant. For the case with one non-trivial ideal, A.~Bonkat reproved R\o{}rdams result by providing a UCT for this class using the cyclic six-term exact sequence in $K$-theory. The second named author generalized this to two non-trivial ideals by including four cyclic six-term exact sequences. R.~Meyer and R.~Nest, and R.~Bentmann recently proved that the obvious guess of an invariant gives a UCT for certain ideal lattices --- the so-called accordion spaces (including, e.g., all \csa s with exactly three primitive ideals). In turn they also provide a series of counter-examples, where we do not have a UCT. They actually find examples of stable, purely infinite, nuclear, separable \csa s in the bootstrap class with finitely many ideals having isomorphic invariants without being isomorphic. This result seems to be in sharp contrast to the stable classification result for all purely infinite Cuntz-Krieger algebras with finitely many ideals obtained by the second named author by use of methods from shift spaces. 

We find it very likely that the reason that Cuntz-Krieger algebras are classifiable, is the restrictive nature of their $\K$-theory. In this paper we examine what happens to real rank zero algebras in the cases where the primitive ideal space has exactly four points. Moreover, we assume that the space is connected (since otherwise the algebras are direct sums of algebras with fewer than four primitive ideals). Also, all the basic counterexamples of R.~Meyer, R.~Nest, and R.~Bentmann are formulated for algebras with four primitive ideals. 
Up to homeomorphism, there are ten different connected $T_0$-spaces with exactly four points. These are
\begin{align*}
\Op(X_1)&=\{\emptyset,\{4\},\{1,4\},\{2,4\},\{3,4\},\{1,2,4\},\{1,3,4\},\{2,3,4\},\{1,2,3,4\}\}, \\
\Op(X_2)&=\{\emptyset,\{4\},\{3,4\},\{2,3,4\},\{1,3,4\},\{1,2,3,4\}\}, \\
\Op(X_3)&=\{\emptyset,\{4\},\{3,4\},\{2,4\},\{2,3,4\},\{1,2,3,4\}\}, \\
\Op(X_4)&=\{\emptyset,\{1\},\{2\},\{3\},\{1,2\},\{1,3\},\{2,3\},\{1,2,3\},\{1,2,3,4\}\}, \\
\Op(X_5)&=\{\emptyset,\{1\},\{2\},\{1,2\},\{1,2,3\},\{1,2,3,4\}\}, \\
\Op(X_6)&=\{\emptyset,\{3\},\{4\},\{3,4\},\{1,3,4\},\{2,3,4\},\{1,2,3,4\}\}, \\
\Op(X_7)&=\{\emptyset,\{1\},\{1,2\},\{1,2,3\},\{1,2,3,4\}\}, \\
\Op(X_8)&=\{\emptyset,\{1\},\{4\},\{1,2\},\{1,4\},\{1,2,3\},\{1,2,4\},\{1,2,3,4\}\}, \\
\Op(X_9)&=\{\emptyset,\{1\},\{3\},\{1,3\},\{3,4\},\{1,2,3\},\{1,3,4\},\{1,2,3,4\}\}, \\
\Op(X_{10})&=\{\emptyset,\{2\},\{1,2\},\{2,3\},\{1,2,3\},\{2,3,4\},\{1,2,3,4\}\}.
\end{align*}
R.~Meyer and R.~Nest, and R.~Bentmann have proved that the spaces $X_7,X_8,X_9$ and $X_{10}$ have a UCT, and thus we can classify  stable, purely infinite, nuclear, separable \csa s in the bootstrap class with these spaces as primitive ideal spaces. Moreover they have provided counter-examples for classification for all the spaces $X_1,X_2,X_3,X_4,X_5,X_6$. 
In this paper we prove the following

\begin{theorem} \label{thm1}
Let $A$ and $B$ be purely infinite, nuclear, separable \csa s of real rank zero in the bootstrap class of R.~Meyer and R.~Nest (cf.~\cite[4.11]{meyernest_boot}). 
Assume that the primitive ideal space of $A$ and $B$ both are homeomorphic to $X_i$ for an $i=1,2,4,5,7,8,9,10$. 
\begin{itemize}
\item[(1)] If $A$ and $B$ are stable, then every isomorphism from $\FK(A)$ to $\FK(B)$ can be lifted to a \siso{} from $A$ to $B$. 

\item[(2)] If $A$ and $B$ are unital, then every isomorphism from $\FK(A)$ to $\FK(B)$ that preserves the unit can be lifted to a \siso{} from $A$ to $B$.
\end{itemize} 
\end{theorem}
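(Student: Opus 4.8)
The plan is to reduce everything to Kirchberg's $\KK_{X}$-classification theorem, which says that for tight, purely infinite, separable, stable, nuclear \csa s over a finite space $X$, any $\KK_{X}$-equivalence lifts to an $X$-equivariant \siso. Thus for part~(1) it suffices to promote a given isomorphism $\FK(A)\to\FK(B)$ to an invertible class in $\KK_{X}(A,B)$ inducing it, since an $X$-equivariant \siso{} is in particular a \siso. Part~(2) I would then deduce from part~(1): stabilize $A$ and $B$, extend the unit-preserving $\FK$-iso\-mor\-phism to the stabilizations, obtain an $X$-equivariant \siso{} $\phi\colon A\otimes\BK\to B\otimes\BK$ from~(1), and use that $\phi$ carries the class of $1_{A}$ to that of $1_{B}$ in $\K_{0}$ to match the units. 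In the purely infinite setting two projections that are full over $X$ and have equal $\K_{0}$-class are Murray--von Neumann equivalent, so conjugating $\phi$ by a suitable partial isometry yields a unital \siso{} $A\to B$.

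The tool for producing the $\KK_{X}$-equivalence is the Meyer--Nest universal coefficient spectral sequence, whose $E_{2}$-page is $\Ext^{*}_{\NT}(\FK(A),\FK(B))$ and which converges to $\KK_{X}(A,B)$, where $\NT$ is the ring of natural transformations on filtrated $\K$-theory. When $\FK(A)$ has projective dimension at most $1$ over $\NT$ the spectral sequence collapses to the short exact universal coefficient sequence, an isomorphism of $\FK$ lifts to a $\KK_{X}$-equivalence, and we are done. For the accordion spaces $X_{7},X_{8},X_{9},X_{10}$ this is exactly the UCT established by Meyer--Nest and Bentmann, so those four cases follow without using real rank zero. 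The difficulty is concentrated in the non-accordion spaces $X_{1},X_{2},X_{4},X_{5}$, where a general algebra can have $\pd_{\NT}\FK(A)=2$ and the resulting $d_{2}$-differential, equivalently an $\Ext^{2}_{\NT}$-term, is precisely the obstruction realized by the known counterexamples.

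Here the real rank zero hypothesis is the crucial new input. For an extension $0\to I\to A\to A/I\to 0$ with $I$ and $A/I$ of real rank zero, a result of Brown--Pedersen and Lin--R\o{}rdam characterizes real rank zero of $A$ by the vanishing of the exponential map $\K_{0}(A/I)\to\K_{1}(I)$; since real rank zero passes to all subquotients and our simple subquotients lie in the bootstrap class, I would conclude that \emph{every} exponential boundary map occurring in $\FK(A)$ vanishes. This makes $\FK(A)$ a ``real rank zero'' $\NT$-module in which only the index boundary maps survive, and I would show that for each of $X_{1},X_{2},X_{4},X_{5}$ this parity constraint cuts the projective dimension down to at most $1$. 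Concretely this is a case-by-case computation: for each space one writes down the generators and relations of $\NT$, exhibits a projective cover of a real-rank-zero module, and checks that its kernel is already projective once the exponential transformations are forced to zero, so that the universal coefficient sequence applies and the desired $\KK_{X}$-equivalence exists.

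The step I expect to be the main obstacle is exactly this homological computation, together with showing it is \emph{sharp}: the same argument must fail for $X_{3}$ and $X_{6}$, where vanishing exponential maps still leave $\pd_{\NT}\FK(A)=2$ and the $\Ext^{2}$-obstruction survives, consistent with the counterexamples available for those two spaces. I would therefore organize the work around a careful comparison of the four ideal lattices, isolating the combinatorial feature of $X_{1},X_{2},X_{4},X_{5}$ that the vanishing of the exponential maps exploits and that $X_{3},X_{6}$ lack. Once projective dimension at most $1$ is established in the four cases, the universal coefficient sequence yields the $\KK_{X}$-equivalence lifting the given $\FK$-iso\-mor\-phism, and Kirchberg's theorem completes the proof of~(1), with~(2) following by the unital adjustment described above.
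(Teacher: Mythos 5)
Your outer frame---Kirchberg's $\KK(X)$-classification theorem, the Meyer--Nest UCT machinery, the observation that real rank zero forces all exponential boundary maps in $\FK$ to vanish (this is Observation~\ref{realrankzero}, via Brown--Pedersen), the reduction of the unital statement to the stable one via \cite{restorffruiz}, and the fact that the accordion spaces $X_7,X_8,X_9,X_{10}$ need no real-rank-zero hypothesis---coincides with the paper's. But on the non-accordion spaces $X_1,X_2,X_4,X_5$ you resolve the difficulty by a genuinely different mechanism: you want to show that the vanishing of the exponential maps forces $\pd_{\NT}\FK(A)\le 1$, so that the original UCT sequence for $\FK$ applies directly. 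The paper never computes, or needs, the projective dimension of $\FK(A)$ over $\NT$. It instead invokes the \emph{refined} invariant $\FK'$ of Meyer--Nest and Bentmann, obtained by adjoining a single extra representing object ($R_{12344}$ for $X_1$, $R_{12334}$ for $X_2$, and analogues for the opposite spaces), for which a UCT is already known for \emph{all} algebras in $\B(X_i)$. The real work is then Propositions~\ref{gunnarefren} and~\ref{gunnarefren2}: the exact sequences of Lemmas~\ref{gunnarsseqs} and~\ref{gunnarsseqs2} degenerate under the real-rank-zero hypothesis so as to exhibit $\FK^{1-n}_{12344}(A)$ as a kernel and $\FK^{n}_{12344}(A)$ as a cokernel of maps built only from $\FK$, whence any $\FK$-isomorphism extends uniquely to an $\FK'$-isomorphism, which lifts to a $\KK(X_i)$-equivalence by \cite[5.14]{filtrated}, resp.\ \cite[6.1.22]{rasmus}. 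Your route, if completed, would buy a cleaner statement (no auxiliary functor at all); the paper's route buys the theorem without any homological algebra over $\NT$ beyond what Meyer--Nest and Bentmann already supply.

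The caveat is that, as written, your proposal defers its only hard step. You assert, and correctly flag as ``the main obstacle,'' that vanishing exponential maps cut $\pd_{\NT}\FK(A)$ down to $1$ over $X_1,X_2,X_4,X_5$; nothing in the proposal establishes this, and it is not a formality. The known projective-dimension-two modules over these spaces (the Meyer--Nest modules $M\otimes\Z/k$) happen to have nonvanishing exponential maps---for $X_1$ the boundary maps $\delta_k^4$ in Diagram~\eqref{diagramgroups} are identities in a fixed degree---so they are excluded by real rank zero, but excluding one family of examples is not a proof that every real-rank-zero $\NT$-module has projective dimension at most $1$. Until that computation is carried out for each of the four spaces, part~(1) is not proved by your argument; the paper's $\FK'$-extension argument is precisely the device that makes this computation unnecessary. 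A small further correction: for $X_6$ the situation is not symmetric with $X_3$---the paper points out that the known $X_6$ counterexamples are \emph{not} of real rank zero and that no real-rank-zero counterexample (nor any finite refinement of $\FK$ with a UCT) is known there, so you should not cite $X_6$ as a space where the real-rank-zero obstruction is known to survive.
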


\begin{theorem} \label{thm2}
There exist stable, purely infinite, nuclear, separable \csa s of real rank zero in the bootstrap class of R.~Meyer and R.~Nest (cf.~\cite[4.11]{meyernest_boot}) with the primitive ideal space homeomorphic to $X_3$, which have isomorphic filtrated $\K$-theory without being isomorphic. 
\end{theorem}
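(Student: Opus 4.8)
First I would identify the space concretely. Reading off the minimal open neighbourhoods from $\Op(X_3)$, the point $4$ is open, the points $2$ and $3$ have minimal open neighbourhoods $\{2,4\}$ and $\{3,4\}$, and $1$ is the unique closed point; hence $X_3$ is the diamond, with $4$ on top, $2$ and $3$ incomparable in the middle, and $1$ at the bottom. This is the smallest non-accordion space, and it is exactly the space underlying the counterexamples of Meyer--Nest and Bentmann in the general purely infinite setting. The real content of the theorem is therefore that the counterexample can be arranged so that both \csa s have real rank zero, in contrast with the spaces treated in Theorem~\ref{thm1}.

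The plan is to argue through filtrated $\K$-theory as a module over the ring $\NT$ of natural transformations, using Kirchberg's $\KK_{X_3}$-classification, by which two stable algebras in our class are \siso{} precisely when they are $\KK_{X_3}$-equivalent. Because the diamond is non-accordion, $\FK$ has homological dimension $2$ over $\NT$, so the Meyer--Nest universal coefficient spectral sequence does not degenerate to a short exact sequence and carries a secondary differential. By a Whitehead-type argument a $\KK_{X_3}$-equivalence between $A$ and $B$ then exists if and only if some isomorphism $\phi\colon\FK(A)\to\FK(B)$ of $\NT$-modules has vanishing image $d_2(\phi)\in\Ext^2_{\NT}(\FK(A),\Sus\FK(B))$. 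A counterexample thus amounts to two algebras with $\FK(A)\cong\FK(B)$ for which \emph{every} such isomorphism carries a nonzero secondary obstruction.

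To force real rank zero I would take the filtrated $\K$-theory concentrated in even degree, i.e.\ with all $\K_1$-groups of subquotients equal to $0$. Under the standing hypotheses this makes all exponential maps in the six-term sequences vanish, which is the filtrated $\K$-theoretic characterization of real rank zero, so any realizing algebra is automatically of real rank zero. I would then fix such a module $M$ and a class $\kappa\in\Ext^2_{\NT}(M,\Sus M)$; note that, although $M$ is even and $\Sus M$ odd, this group need not vanish, since $\NT$ contains odd degree-shifting transformations and the real rank zero condition only forces the exponential transformations to act as zero. Using the range-of-the-invariant results for purely infinite, stable, nuclear bootstrap \csa s over $X_3$, I would realize $M$ by an algebra $A$ and use $\kappa$ to realize a second algebra $B$ with $\FK(B)\cong M$ but whose $d_2$-obstruction relative to $A$ equals $\kappa$. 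Since both realize the same even module, both have real rank zero; and if $\kappa$ is not annihilated by the induced action of $\Aut_{\NT}(M)$, then no isomorphism of filtrated $\K$-theory lifts, so $A$ and $B$ are not $\KK_{X_3}$-equivalent and hence not \siso{}, while $\FK(A)\cong\FK(B)$.

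The main obstacle is the computation underlying the choice of $\kappa$: one must exhibit an explicit even $\NT$-module $M$ over the diamond for which $\Ext^2_{\NT}(M,\Sus M)$, modulo the $\Aut_{\NT}(M)$-action, is nontrivial, and verify by a direct analysis of $\NT$-module morphisms that the chosen class survives. This is precisely the point that separates $X_3$ from the non-accordion spaces $X_1,X_2,X_4,X_5$ of Theorem~\ref{thm1}, where imposing real rank zero collapses the analogous secondary obstruction and restores classification; the delicate work is to show that for the diamond the vanishing of all exponential maps does \emph{not} collapse it.
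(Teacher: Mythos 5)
There is a genuine gap, and it is located exactly where you flag the ``main obstacle.'' Your outline correctly identifies the mechanism (projective dimension $2$ over $\NT$, a secondary obstruction that survives the action of $\Aut_{\NT}(M)$, Kirchberg's theorem converting $\KK(X_3)$-inequivalence into non-isomorphism), but the theorem's entire content is the explicit construction you defer: producing a concrete exact $\NT$-module and a nonvanishing obstruction class. Worse, the specific ansatz you propose for guaranteeing real rank zero --- taking $\FK$ concentrated in even degree, i.e.\ all $\K_1$-groups of all subquotients equal to zero --- cannot work. By the result of Arklint--Bentmann--Katsura cited in the remark following the paper's proof, for real rank zero \csa s over $X_3$ with free (in particular, zero) $\K_1$-groups, isomorphisms on a \emph{reduced} filtrated $\K$-theory lift to $\KK(X_3)$-equivalences; since the reduced invariant is obtained from $\FK$ by discarding data, isomorphic $\FK$ forces isomorphic reduced invariants and hence $\KK(X_3)$-equivalence. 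So in the class you restrict to, filtrated $\K$-theory \emph{is} complete and no counterexample exists. The real rank zero condition is only the vanishing of the $\K_0\to\K_1$ components of the boundary transformations (Observation~\ref{realrankzero}); imposing it degreewise symmetrically, as you do, throws away exactly the room in which the obstruction lives.

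The paper's construction keeps nontrivial odd-degree data (indeed torsion in both degrees, as the subsequent remark emphasizes). Concretely: with $P_Y(Z)=\NT(Y,Z)$, one takes the cokernel $M$ of $j\colon P_{234}\to P_{24}\oplus P_1[1]\oplus P_{34}$, sets $M_k=M\otimes\Z/k$, which is exact of projective dimension $2$ and has vanishing even-to-odd boundary maps but a $\Z/k$ in odd degree at the point $1$; one realizes $M_k$ by some $A_k$ in the bootstrap class, forms the phantom tower, and lets $B$ be the mapping cone of the map $A_k\to N_2$, which lies in $(\ker\FK)^2$. Then $B$ and $A_k\oplus\Sus N_2$ have isomorphic invariant $M_k\oplus P_{234}[1]$ without being $\KK(X_3)$-equivalent, and both $M_k$ and $P_{234}[1]$ have vanishing even-to-odd boundary components, which is what yields real rank zero for tight purely infinite representatives. (A small factual point: in this paper the original Meyer--Nest counterexample is formulated over $X_1$, the one-open-point fan, not over the diamond $X_3$; Bentmann's later examples cover all non-accordion spaces.) To repair your argument you would need to replace ``$\FK$ concentrated in even degree'' by ``even-to-odd boundary maps vanish'' and then actually exhibit the module and verify the nonvanishing of the obstruction, which is essentially forced back onto the paper's construction.
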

\noindent where $\FK$ denotes the functor filtrated $\K$-theory which will be defined shortly.

For the case where the primitive ideal space is isomorphic to $X_6$ there are still no counterexamples for the real rank zero case --- however our methods do not apply as there is no known finite refinement of $\FK$ which gives a UCT. 

In general the unital part of Theorem~\ref{thm1} follows from the stable part by using results from \cite{restorffruiz}. For $X_7$, Theorem~\ref{thm1} is proved by R.~Meyer and R.~Nest in~\cite[4.14]{filtrated}, for $X_8$, $X_9$ and $X_{10}$, it is proved by R.~Bentmann in~\cite[5.4.2]{rasmus}. In Section~2 of this paper we set up notation and prove some preliminary results used later in this paper. In Sections~3 and~4 Theorem~\ref{thm1} is proved for $X_1$, $X_2$, $X_4$ and $X_5$ (cf.\ Corollaries~\ref{classi1} and~\ref{classi2} and Remarks~\ref{Xoneop} and~\ref{Xtwoop}). The proofs rely on the result~\cite[4.3]{kirchberg} of E.~Kirchberg that $\KK(X)$-equi\-va\-len\-ces lift to $X$-equivariant isomorphisms for stable, separable, nuclear, purely infinite \csa s with primitive ideal space homeomorphic to a finite $T_0$-space $X$. Theorem~\ref{thm2} is proved in Section~\ref{secthm2}.

\section{Preliminaries and notation}

In this section, we briefly discuss \csa s over a topological space $X$ and the invariant introduced by R.~Meyer and R.~Nest in \cite{filtrated} called filtrated $\K$-theory.  We refer the reader to \cite{filtrated} for details.

We would like to note that there are other invariants in the literature which are closely related to filtrated $\K$-theory.  Examples are filtered $\K$-theory and ideal related $\K$-theory.  It has been proved by R.~Meyer and R.~Nest in \cite{filtrated} and R.~Bentmann in~\cite[5.4.2]{rasmus} that for the spaces $X_{i}$ that these invariants are naturally isomorphic to filtrated $\K$-theory.  It is not known if these invariants are naturally isomorphic for all finite topological spaces. 


\subsection{\csa s over a topological space $X$}

A \csa\ over a topological space $X$ is a pair $(A,\psi)$ consisting of a \csa\ $A$ and a continuous map $\psi\colon\Prim(A)\to X$ where $\Prim(A)$ denotes the primitive ideal space of $A$.
Assume from now on that $X$ is a finite topological space satisfying the $T_0$ separation axiom, i.e., such that $\overline{\{x\}}\neq\overline{\{y\}}$ for all $x,y\in X$ with $x\neq y$.
Let $\Op(X)$ denote the open subsets of $X$, and let $\Ideals(A)$ denote the lattice of (two-sided, closed) ideals of $A$.
A \csa\ over $X$ can then equivalently be defined as a pair $(A,\psi)$ consisting of a \csa\ $A$ and a map $\psi\colon\Op(X)\to\Ideals(A)$ that preserves infima and suprema.  We then write $A(U)$ for $\psi(U)$.

The locally closed subsets of $X$ are denoted by $\LC(X)=\{ U\setminus V \mid  V,U\in\Op(X), V\subset U\}$, and the connected, non-empty, locally closed subsets of $X$ are denoted by $\LC(X)^*$.
For $Y\in\LC(X)$ we define $A(Y)=A(U)/A(V)$ when $Y=U\setminus V$ for some $V,U\in\Op(X)$ satisfying $V\subset U$.  Up to canonical isomorphism, $A(Y)$ does not depend on the choice of $U$ and $V$.

For \csa s $A$ and $B$ over $X$, we say that a \shom\ $\phi\colon A\to B$ is $X$-equivariant if $\phi(A(U))\subset B(U)$ holds for all $U\in\Op(X)$.
An extension $A\into B\onto C$ is called $X$-equivariant if it induces an extension $A(U)\into B(U)\onto C(U)$ for all $U\in\Op(X)$.

E.~Kirchberg has constructed $X$-equivariant $\KK$-theory $\KK_*(X;-,-)$, also called ideal related $\KK$-theory and here referred to as $\KK(X)$-theory.  We denote by $\kk(X)$ the category of separable \csa s over $X$ with $\KK_0(X)$-classes as morphism groups. In~\cite[3.11]{meyernest_boot}, R.~Meyer and R.~Nest show that the category $\kk(X)$ equipped with the suspension automorphism $\Sus$ and mapping cone triangles as distinguished triangles is triangulated; so mapping cones of $X$-equivariant \shom s give exact triangles, and so do extensions over $X$ that split by an $X$-equivariant completely positive contraction.  

\subsection{Filtrated $\K$-theory $\FK$ and the UCT}
One defines for each $Y\in\Op (X)$ the functor $\FK_Y$ by $\FK_Y(A)=\K_*(A(Y))$.
We write $\FK_Y^i(A)$ for $\K_i(A(Y))$.
In~\cite{filtrated} R.~Meyer and R.~Nest construct commutative \csa s $R_Y$ over $X$ such that $\KK_*(X;R_Y,-)$ and $\FK_Y$ are equivalent functors.

By the Yoneda Lemma, cf.~\cite[3.2]{working}, the set $\NT(Y,Z)$ of natural trans\-forma\-tions from the functor $\FK_Y$ to the functor $\FK_Z$ is then given by $\KK_*(X;R_Z,R_Y)$.
Given $\alpha\in\KK_*(X;R_Z,R_Y)$ we denote by $\bar\alpha$ the corresponding element in $\NT(Y,Z)$ given by $\alpha\boxtimes -$ where $-\boxtimes-$ denotes the $X$-equivariant Kasparov product.  Given $f\in\NT(Y,Z)$, we let $\hat f$ denote the corresponding element in $\KK_*(X;R_Z,R_Y)$.

The functor $\FK$ is then defined as the family of functors $(\FK_Y)_{Y\in\LC(X)^*}$ together with the sets $\NT(Y,Z)$ of natural transformations.
The target category of $\FK$ is the category of modules over the ring $\NT=\bigoplus_{Y,Z\in\LC(X)^*} \NT(Y,Z)$.
A homomorphism $\FK(A)\to\FK(B)$ is then a family of homomorphisms $(\phi_Y)$ that respects the natural transformations.
Kasparov multiplication induces a map $\KK_*(X;A,B)\to\Hom(\FK(A),\FK(B))$, and for $A=R_Y$ this map is an isomorphism.

In~\cite{filtrated} R.~Meyer and R.~Nest establish a UCT for $\KK(X)$-theory, i.e., they establish exactness of
\[ \Ext_{\NT}^1(\FK(A),\FK(B)) \into \KK_*(X;A,B) \onto \Hom_{\NT}(\FK(A),\FK(B)) \]
for $A$ and $B$ separable \csa s over $X$ with $A$ belonging to the bootstrap class $\B(X)$ defined by R.~Meyer and R.~Nest, cf.~\cite[4.11]{meyernest_boot}, and with $\FK(A)$ having projective dimension at most 1 as a module over $\NT$.
By construction, $\FK(R_Y)$ has projective dimension 0 for all $Y\in\LC(X)^*$.
By~\cite[4.13]{meyernest_boot}, a nuclear \csa\ over $X$ belongs to $\B(X)$ if and only if its simple subquotients belong to the bootstrap class of J.~Rosenberg and C.~Schochet.

\subsection{Construction of $R_Y$}

The \csa s $R_Y$ are constructed as follows.  Define a partial order on $X$ by $x\leq y$ when $x\in\overline{\{y\}}$.  The order complex $\Ch(X)$ is the geometric realisation of the simplicial set whose nondegenerate $n$-simplices $[x_0,\ldots,x_n]$ are strict chains $x_0<\cdots<x_n$.
Maps $m,M\colon \Ch(X)\to X$ is then defined by the inner of a simplex $[x_0,\ldots,x_n]$ being sent to $x_0$ by $m$ and to $X_n$ by $M$.  The \csa s $R_Y$ over $X$ are then defined by
$R_Y(Z)=C_0(m^{-1}(Y)\cap M^{-1}(Z))$ for all $Y,Z\in\LC(X)$.

For $Y\in\LC(X)$ and $U\in\Op(Y)$, we then get $X$-equivariant extensions $R_{Y\setminus U}\into R_Y\onto R_U$.  The natural transformation given by $R_{Y\setminus U}\into R_Y$ is denoted $r_Y^{Y\setminus U}$ and called a restriction map, the natural transformation given by $R_Y\onto R_U$ is denoted by $i_U^Y$ and called an extension map, and the natural transformation given by $R_{Y\setminus U}\into R_Y\onto R_U$ is denoted by $\delta_{Y\setminus U}^U$ and called a boundary map.
For a \csa\ $A$ over $X$, these natural transformations are the ones appearing in the six-term exact sequence induced by the extension $A(U)\into A(Y)\onto A(Y\setminus U)$.
It is unknown whether there exists finite $T_0$-spaces $X$ over which the ring $\NT$ is not generated by transformations of this form, but for the spaces $X_1,X_2,\ldots,X_{10}$ considered in this paper, this is not the case.

\section{The counterexample of Meyer and Nest}
We now restrict to the space $X_1=\{1,2,3,4\}$ with $\Op(X_1)=\{\emptyset\}\cup\{U\subset X_1\mid 4\in U\}$.  We abbreviate, e.g., $\{1,2,3\}$ to $123$.
A \csa\ $A$ over $X_1$ is then an extension of the form $A(4)\into A\onto A(1)\oplus A(2) \oplus A(3)$.
The ordering on $X$ induced by its topology is then defined by $i\leq 4$ for all $i\in X_1$, its Hasse diagram (or, more correctly, the Hasse diagram of the inverse order relation) is
\[ \xymatrix@!0@C=25pt@R=25pt{
1 & 2 & 3 \\
& 4\ar[ul]\ar[u]\ar[ur] & ,
} \]
and $\LC(X_1)^*=\{4,14,24,34,124,134,234,1234,1,2,3\}$.
In~\cite{filtrated} it is shown that the ring $\NT=\bigoplus_{Y,Z\in\LC(X_1)^*}\NT(Y,Z)$ is generated by natural transformations $i$, $r$ and $\delta$ that are induced by six-term exact sequences, and the indecomposable transformations are of infinite order and fit into the following diagram
\[ \xymatrix@!0@C=40pt@R=35pt{
& 14\ar[r]^-{i}\ar[rd]^-{i} & 124\ar[rd]^-{i} & & 1\ar[dr]^-{\delta}|\circ & \\
4\ar[ru]^-{i}\ar[r]^-{i}\ar[rd]^-{i} & 24\ar[ru]^-{i}\ar[rd]^-{i} & 134\ar[r]^-{i} & 1234\ar[ru]^-{r}\ar[r]^-{r}\ar[rd]^-{r} & 2\ar[r]^-{\delta}|\circ & 4 \\
& 34\ar[ru]^-{i}\ar[r]^-{i} & 234\ar[ru]^-{i} & & 3\ar[ur]^-{\delta}|\circ &
} \]
where the six squares commute and the sum of the three transformations from 1234 to 4 vanishes.

\subsection{The refined invariant}
In~\cite{filtrated}, R.~Meyer and R.~Nest refine the invariant $\FK$ to an invariant $\FK'$.  They prove a UCT for this refined invariant, so for $A$ and $B$ in the bootstrap class $\B(X_1)$ one can lift isomorphisms between $\FK'(A)$ and $\FK'(B)$ to $\KK(X_1)$-equivalences, and by combining this with the classification result~\cite[4.3]{kirchberg} of E.~Kirchberg conclude that it strongly classifies the stable, purely infinite, separable, nuclear \csa s $A$ that are tight over $X_1$ and whose simple subquotients $A(4)$, $A(1)$, $A(2)$ and $A(3)$ lie in the bootstrap class, see~\cite[5.14, 5.15]{filtrated}. 

In \cite{restorffruiz}, the second and third author showed how one can strongly classify a class of unital properly infinite $C^{*}$-algebras given that the this class are strongly classified up to stable isomorphism.  Since $\FK' ( \cdot )$ strongly classifies the class of stable, purely infinite, separable, nuclear \csa s $A$ that are tight over $X_{1}$, by Theorem 2.1 of \cite{restorffruiz}, $\FK'( \cdot )$ together with class of the unit strongly classifies the class of unital, purely infinite, separable, nuclear \csa s $A$ that are tight over $X_{1}$.

The invariant is defined by constructing a \csa\ $R_{12344}$ over $X_1$ and adding $\KK_*(X_1;R_{12344},-)$ to the family of functors.
The indecomposable transformations in the larger ring $\NT'=\bigoplus_{Y,Z\in\LC(X_1)^*\cup\{12344\}}\NT(Y,Z)$ fit into the following diagram:
\begin{equation}
 \vcenter{\xymatrix@!0@C=40pt@R=35pt{
& 14\ar[rd] & & 124\ar[rd]^-{i} & & 1\ar[dr]^-{\delta}|\circ & \\
4\ar[ru]^-{i}\ar[r]^-{i}\ar[rd]^-{i} & 24\ar[r] & 12344\ar[ru]\ar[r]\ar[rd] & 134\ar[r]^-{i} & 1234\ar[ru]^-{r}\ar[r]^-{r}\ar[rd]^-{r} & 2\ar[r]^-{\delta}|\circ & 4 \\
& 34\ar[ru] & & 234\ar[ru]^-{i} & & 3\ar[ur]^-{\delta}|\circ &
}} \label{diagram1}
\end{equation}

The \csa\ $R_{12344}$ is the mapping cone of a generator of the cyclic free group $\NT(234,14)$ and its filtrated $\K$-theory is
\begin{equation} 
\vcenter{\xymatrix@!0@C=40pt@R=35pt{
& 0\ar[r]^-{i}\ar[rd]^-{i} & \Z\ar[rd]^-{i} & & \Z\ar[dr]^-{\delta}|\circ & \\
\Z[1]\ar[ru]^-{i}\ar[r]^-{i}\ar[rd]^-{i} & 0\ar[ru]^-{i}\ar[rd]^-{i} & \Z\ar[r]^-{i} & \Z^2\ar[ru]^-{r}\ar[r]^-{r}\ar[rd]^-{r} & \Z\ar[r]^-{\delta}|\circ & \Z[1] \\
& 0\ar[ru]^-{i}\ar[r]^-{i} & \Z\ar[ru]^-{i} & & \Z\ar[ur]^-{\delta}|\circ &
 }} \label{diagramgroups}
 \end{equation}
where the three maps $i_{ij4}^{1234}$ are given by the three coordinate embeddings $\Z\to\Z^3/(1,1,1)$, the three maps $r_{1234}^k$ are given by the three projections $\Z^3/(1,1,1)\to\Z^2/(1,1)$ onto coordinate hyperplanes, and the three maps $\delta_k^4$ are the identity.


\begin{lemma} \label{thankyourasmus}
	Assume that $\FK_Y(A)$ and $\FK_Y(R_{12344})$ are isomorphic for all $Y\in\LC(X_1)^*$ and that $i_{124}^{1234}\oplus i_{134}^{1234}\colon \FK_{124}(A)\oplus\FK_{134}(A)\to\FK_{1234}(A)$ is an isomorphism.  
	Then $\FK(A)$ and $\FK(R_{12344})$ are isomorphic as $\NT$-modules and $A$ and $R_{12344}$ are $\KK(X_1)$-equivalent.
\begin{proof}
Define for each $Y\in\LC(X)^*$ an $\NT$-module $P_Y$ as $P_Y(Z)=\NT(Y,Z)$.  Then $P_Y$ is freely generated by $\id_Y\in P_Y(Y)$ as an $\NT$-module.
Define $j\colon P_{1234}\to P_{124}\oplus P_{134}\oplus P_{234}$ by $f\mapsto fi_{124}^{1234} +fi_{134}^{1234} +fi_{234}^{1234}$.  Then $\FK(R_{12344})$ is isomorphic to $\cok j$ as $\NT$-modules, cf.~\cite[Section~5]{filtrated}, with $\im j$ generated by $i_{124}^{1234} +i_{134}^{1234} +i_{234}^{1234}$.

Hence an $\NT$-morphism $\FK(R_{12344})\to\FK(A)$ can be defined by choosing elements $g_Y\in\FK_Y(A)$, $Y\in\{124,134,234\}$, satisfying $i_{124}^{1234}(g_{124}) +i_{134}^{1234}(g_{134}) +i_{234}^{1234}(g_{234})=0$, and defining the map by $\id_Y\mapsto g_Y$ for $Y\in\{124,134,234\}$ and expanding by $\NT$-linearity.

If $g_Y$ generates $\FK_Y(A)$ for all $Y\in\{124,134,234\}$, then the morphism will be an isomorphism: it is automatically bijective $\FK_Z(R_{12344})\to\FK_Z(A)$ for $Z\in\{124,134,234\}$, by the assumptions in the lemma it is therefore surjective and hence bijective for $Z=1234$, and by exactness it then follows that it is bijective for $Z\in\{1,2,3\}$ whereby bijectivity for $Z=4$ also follows, cf.~the Diagram~\eqref{diagramgroups}.

Let $g_Y$ be a generator of $\FK_Y(A)$ for $Y\in\{124,134,234\}$.  Since $i_{124}^{1234} \oplus i_{134}^{1234}$ is an isomorphism, $\FK_{1234}(A)$ is spanned by $i_{124}^{1234}(g_{124})$ and $i_{134}^{1234}(g_{134})$ so we may write $i_{234}^{1234}(g_{234})=mi_{124}^{1234}(g_{124})+ni_{134}^{1234}(g_{134})$ for some $m,n\in\Z$.  Since $\FK_{34}(A)=0$, $\FK_{24}(A)=0$ and $\FK_{14}(A)=0$, the four maps $r_{234}^2\colon\FK_{234}(A)\to\FK_2(A)$, $r_{234}^3\colon\FK_{234}(A)\to\FK_3(A)$, $r_{124}^2\colon\FK_{124}(A)\to\FK_2(A)$  and $r_{134}^3\colon\FK_{134}(A)\to\FK_3(A)$ are isomorphisms, so $r_{124}^2(g_{124})$ and $r_{234}^2(g_{234})=mr_{124}^2(g_{124})$ both generate $\FK_2(A)$, and $r_{134}^3(g_{134})$ and $r_{234}^3(g_{234})=nr_{134}^3(g_{134})$ both generate $\FK_3(A)$, so $m,n\in\{\pm 1\}$.  By replacing $g_{124}$ with $-mg_{124}$ and $g_{134}$ with $-ng_{134}$ the required is fulfilled.

In the discussion after the proof of Lemma 5.9 in \cite{filtrated}, we have that the natural homomorphism from $\KK ( X_{1} ; R_{12344} , A )$ to $\mathrm{Hom} ( \FK ( R_{12344} ), \FK ( A ) )$ is an isomorphism.  Since $\FK(A)$ and $\FK(R_{12344})$ are isomorphic as $\NT$-modules, we have that $A$ and $R_{12344}$ are $\KK(X_1)$-equivalent.
\end{proof}
\end{lemma}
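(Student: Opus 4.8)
The plan is to build the $\NT$-module isomorphism $\FK(R_{12344})\cong\FK(A)$ by hand and then promote it to a $\KK(X_1)$-equivalence using the special structure of $R_{12344}$. First I would recall from the construction in~\cite{filtrated} the representable projective $\NT$-modules $P_Y$ given by $P_Y(Z)=\NT(Y,Z)$, each freely generated by $\id_Y\in P_Y(Y)$. The presentation of $R_{12344}$ as a mapping cone realises $\FK(R_{12344})$ as $\cok j$, where $j\colon P_{1234}\to P_{124}\oplus P_{134}\oplus P_{234}$ sends $f$ to $f i_{124}^{1234}+f i_{134}^{1234}+f i_{234}^{1234}$, so that $\im j$ is generated by the single relator $i_{124}^{1234}+i_{134}^{1234}+i_{234}^{1234}$. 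Because the $P_Y$ are free, any $\NT$-morphism out of this cokernel is determined by the images of the generators $\id_Y$; hence to specify a morphism $\FK(R_{12344})\to\FK(A)$ it suffices to choose elements $g_Y\in\FK_Y(A)$ for $Y\in\{124,134,234\}$ satisfying the one relation $i_{124}^{1234}(g_{124})+i_{134}^{1234}(g_{134})+i_{234}^{1234}(g_{234})=0$, and then extend by $\NT$-linearity.

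Next I would isolate a criterion under which such a morphism is an isomorphism: it should suffice that each $g_Y$ generates $\FK_Y(A)\cong\Z$. The morphism is then automatically bijective at the three vertices $Z\in\{124,134,234\}$; the hypothesis that $i_{124}^{1234}\oplus i_{134}^{1234}$ is an isomorphism upgrades this to surjectivity, hence bijectivity, at $Z=1234$; and exactness of the six-term sequences, read off from Diagram~\eqref{diagramgroups}, then propagates bijectivity to $Z\in\{1,2,3\}$ and finally to $Z=4$.

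The delicate point, and the step I expect to be the main obstacle, is to produce generators $g_Y$ that also satisfy the relation. My plan is to start from arbitrary generators and use the assumed isomorphism $i_{124}^{1234}\oplus i_{134}^{1234}$ to write $i_{234}^{1234}(g_{234})=m\,i_{124}^{1234}(g_{124})+n\,i_{134}^{1234}(g_{134})$ with $m,n\in\Z$, and then to show $m,n\in\{\pm 1\}$. Here I would exploit that $\FK_{14}(A)=\FK_{24}(A)=\FK_{34}(A)=0$, which forces the restriction maps $r_{234}^2$, $r_{234}^3$, $r_{124}^2$ and $r_{134}^3$ to be isomorphisms $\Z\to\Z$; comparing the resulting images in $\FK_2(A)$ and $\FK_3(A)$ then pins $m$ and $n$ down to units. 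Replacing $g_{124}$ by $-m g_{124}$ and $g_{134}$ by $-n g_{134}$ leaves them generators while enforcing the relation, yielding the desired $\NT$-module isomorphism.

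Finally, to reach $\KK(X_1)$-equivalence, I would invoke the isomorphism $\KK(X_1;R_{12344},A)\to\Hom(\FK(R_{12344}),\FK(A))$ recorded in the discussion following Lemma~5.9 of~\cite{filtrated}. The $\NT$-module isomorphism just constructed then lifts to an invertible class in $\KK(X_1;R_{12344},A)$, which is precisely a $\KK(X_1)$-equivalence between $A$ and $R_{12344}$.
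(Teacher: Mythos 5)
Your proposal is correct and follows essentially the same route as the paper's proof: the presentation of $\FK(R_{12344})$ as $\cok j$, the criterion that generating choices of $g_Y$ satisfying the single relation give an isomorphism, the unit argument via the vanishing of $\FK_{14},\FK_{24},\FK_{34}$ to force $m,n=\pm1$, and the lift to a $\KK(X_1)$-equivalence via the isomorphism $\KK(X_1;R_{12344},A)\to\Hom(\FK(R_{12344}),\FK(A))$. No substantive differences.
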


\begin{lemma} \label{kegletrick1}
	There exists an exact triangle
\[ \xymatrix@!0@C=40pt@R=35pt{
R_{1234}\ar[dr]_-{\phi} & & R_{12344}\ar[ll]_-{\pi}|\circ \\
& R_{124}\oplus R_{134}\oplus R_{234}\ar[ur]_-{\iota} & 
} \]
	satisfying that $\bar\phi = (i_{124}^{1234},i_{134}^{1234},i_{234}^{1234})\in\bigoplus\NT(ij4,1234)$,
	that $\bar\pi$ generates the group $\NT(1234,12344)$, and that $\bar\iota=(f^{124},f^{134},f^{234})\in\bigoplus\NT(12344,ij4)$ with each $f^{ij4}$ generating $\NT(12344,ij4)$ respectively.
	\begin{proof}
	Let $\phi\colon R_{1234}\to R_{124}\oplus R_{134}\oplus R_{234}$ be given by restriction to subsets; then $\bar\phi=(i_{124}^{1234},i_{134}^{1234},i_{234}^{1234})$.  Constructing the mapping cone $\Cone_\phi$ of $\phi$, we get an exact triangle
	\[ \xymatrix@!0@C=40pt@R=35pt{
	R_{1234} \ar[dr]_-{\phi} & & \Sus\Cone_\phi\ar[ll]_-{\pi}|\circ \\
	& R_{124}\oplus R_{134}\oplus R_{234}\ar[ur]_-{\iota} & 
	} \]
	and by applying $\KK_*(X_1;R_Y,-)=\FK_Y$ and calculating $\FK_Y(\phi)$, one sees that $\FK_Y(\Sus\Cone_\phi)$ and $\FK_Y(R_{12344})$ are isomorphic for all $Y\in\LC(X_1)^*$, cf.\ Diagram~\eqref{diagramgroups}.
	
Furthermore one sees that $\FK_{ij4}(\iota)$ are isomorphisms, and that $\FK_{1234}(\iota)$ is surjective as $\FK_{1234}(\phi)$ is injective and (using standard generators) is given by $\Z\ni x\mapsto (x,x,x)\in\Z\oplus\Z\oplus\Z$.
Using that $\FK_Y(\iota)$ respects the natural transformations, and that the natural transformation $\FK_{124}(\bigoplus R_{ij4})\oplus\FK_{134}(\bigoplus R_{ij4})\to\FK_{1234}(\bigoplus R_{ij4})$ is given by $\Z\oplus\Z\ni (x,y)\mapsto (x,y,0)\in\Z\oplus\Z\oplus\Z$ (using standard generators), one can then check that $\FK_{124}(\Sus\Cone_\phi)\oplus\FK_{134}(\Sus\Cone_\phi)\to\FK_{1234}(\Sus\Cone_\phi)$ is an isomorphism. Hence $\Sus\Cone_\phi$ and $R_{12344}$ are $\KK(X_1)$-equivalent by Lemma~\ref{thankyourasmus}.
	
	Therefore $\pi$ and $\iota$ induce natural transformations, and since all the involved groups of natural transformations are cyclic and free, we may write $\bar\pi=nf_{1234}$ with $f_{1234}$ generating $\NT(1234,12344)$ and $\bar\iota=(n_{ij4}f^{ij4})$ with $f^{ij4}$ generating the group $\NT(12344,ij4)$.
	
	Then $\FK_{ij4}(\iota)=n_{ij4}\FK_{ij4}(\hat f^{ij4})$, since $\FK_{ij4}(R_{124}\oplus R_{134}\oplus R_{234})=\FK_{ij4}(R_{ij4})$, so as $\FK_{ij4}(\iota)$ is an isomorphism $\Z\to\Z$, we see that $n_{ij4}=\pm 1$.
	
	But $\FK_Y(\pi)=0$ for all $Y$.  However, since $\FK_{ij4}(R_{1234})=0$ and $\FK_{1234}(R_{1234})=\Z$, we get by applying $\KK_*(X_1;-,R_{1234})$ to the exact triangle that $\bar\pi=nf_{1234}$ on $R_{1234}$ is an isomorphism $\Z\to\Z[1]$, hence $n=\pm 1$.
	\end{proof}
\end{lemma}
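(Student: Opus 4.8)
The plan is to realize the desired triangle as a suspended mapping cone and then pin down the three induced natural transformations by comparing $\K$-theory groups. First I would let $\phi\colon R_{1234}\to R_{124}\oplus R_{134}\oplus R_{234}$ be the $X_1$-equivariant \shom{} given by restricting functions to the relevant subsets; computing its effect on $\FK$ shows at once that $\bar\phi=(i_{124}^{1234},i_{134}^{1234},i_{234}^{1234})$. Forming the mapping cone $\Cone_\phi$ produces an exact triangle of the required shape but with $\Sus\Cone_\phi$ in place of $R_{12344}$, so the first real task is to identify $\Sus\Cone_\phi$ with $R_{12344}$ up to $\KK(X_1)$-equivalence.

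For this identification I would apply the functors $\FK_Y=\KK_*(X_1;R_Y,-)$ to the triangle and run the resulting six-term exact sequences. In standard generators $\FK_{1234}(\phi)$ is the diagonal $\Z\ni x\mapsto(x,x,x)$, which is injective; this lets me compute $\FK_Y(\Sus\Cone_\phi)$ and verify that it agrees with $\FK_Y(R_{12344})$ read off from Diagram~\eqref{diagramgroups} for every $Y\in\LC(X_1)^*$. To invoke Lemma~\ref{thankyourasmus} I must also check that $i_{124}^{1234}\oplus i_{134}^{1234}$ is an isomorphism on $\FK(\Sus\Cone_\phi)$, which follows once one records that the relevant transformation on $\bigoplus R_{ij4}$ is $(x,y)\mapsto(x,y,0)$. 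The lemma then yields the $\KK(X_1)$-equivalence $\Sus\Cone_\phi\simeq R_{12344}$, after which I may replace one by the other and regard $\pi$ and $\iota$ as inducing natural transformations.

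Since each of $\NT(1234,12344)$ and $\NT(12344,ij4)$ is free cyclic, I may write $\bar\pi=n\,f_{1234}$ and $\bar\iota=(n_{ij4}f^{ij4})$ for fixed generators $f_{1234}$ and $f^{ij4}$. The coefficients $n_{ij4}$ are the easy part: because $\FK_{ij4}(R_{124}\oplus R_{134}\oplus R_{234})=\FK_{ij4}(R_{ij4})$ and $\FK_{ij4}(\iota)$ is an isomorphism $\Z\to\Z$, each $n_{ij4}=\pm1$, as desired.

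The main obstacle is the coefficient $n$ attached to $\bar\pi$, because $\FK_Y(\pi)=0$ for every $Y$, so the covariant functors detect nothing of $\pi$. The trick I would use is to apply instead the \emph{contravariant} functor $\KK_*(X_1;-,R_{1234})$ to the exact triangle. Since $\FK_{ij4}(R_{1234})=0$ while $\FK_{1234}(R_{1234})=\Z$, the resulting long exact sequence degenerates so that the map induced by composition with $\pi$ is forced to be an isomorphism $\Z\to\Z[1]$; as this map is multiplication by $n$ through $\bar\pi=n\,f_{1234}$, we get $n=\pm1$ and hence $\bar\pi$ generates $\NT(1234,12344)$. This dual computation, replacing the vanishing covariant picture by the nonvanishing contravariant one, is the delicate point of the argument.
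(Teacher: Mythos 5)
Your proposal is correct and follows essentially the same route as the paper's own proof: realizing the triangle as a suspended mapping cone of the restriction map, identifying $\Sus\Cone_\phi$ with $R_{12344}$ via Lemma~\ref{thankyourasmus}, and pinning down the coefficient of $\bar\pi$ by applying the contravariant functor $\KK_*(X_1;-,R_{1234})$ since the covariant functors all kill $\pi$. No substantive differences to report.
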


\begin{lemma} \label{kegletrick2}
	There exists an exact triangle
\[ \xymatrix@!0@C=40pt@R=35pt{
R_{12344}\ar[dr]_-{\pi} & & R_{4}\ar[ll]_-{\iota}|\circ \\
& R_{14}\oplus R_{24}\oplus R_{34}\ar[ur]_-{\phi} & 
} 
\]
satisfying that $\bar\phi = (i_{4}^{14},i_{4}^{24},i_{4}^{34})\in\bigoplus\NT(4,k4)$,
	that $\bar\iota$ generates $\NT(12344,4)$, and that $\bar\pi=(f_{14},f_{24},f_{34})\in\bigoplus\NT(k4,12344)$ with each $f_{k4}$ generating the group $\NT(k4,12344)$ respectively.
\begin{proof}
Let $\phi\colon R_{14}\oplus R_{24}\oplus R_{34}\to M_3(R_4)$ be given by restriction to subsets such that $\bar\phi = (i_{4}^{14},i_{4}^{24},i_{4}^{34})$ and construct the mapping cone $\Cone_\phi$ of $\phi$.  By calculating $\FK_Y(\phi)$ and by applying $\FK_Y$ to the mapping cone triangle
\[ \xymatrix@!0@C=40pt@R=35pt{
\Cone_\phi\ar[dr]_-{\pi} & & R_{4}\ar[ll]_-{\iota}|\circ \\
& R_{14}\oplus R_{24}\oplus R_{34}\ar[ur]_-{\phi} & 
} 
\]
we see that $\FK_Y(\Cone_\phi)\cong\FK_Y(R_{12344})$ for all $Y\in\LC(X_1)^*$.

Furthermore we see that $\FK_4(\iota)$ and $\FK_k(\pi)$ are isomorphisms, and that $\FK_{ij4}(\pi)$ and $\FK_{1234}(\pi)$ are injective as $\FK_{ij4}(\phi)$ and $\FK_{1234}(\phi)$ are surjective and (by using standard generators) are given by $\Z\oplus\Z\ni (x,y)\mapsto x+y\in\Z$ respectively $\Z\oplus\Z\oplus\Z\ni (x,y,z)\mapsto x+y+z\in\Z$.

Using that $\FK_Y(\pi)$ respects the natural transformations, and that the natural transformation $\FK_{124}(\bigoplus R_{k4})\oplus\FK_{134}(\bigoplus R_{k4})\to\FK_{1234}(\bigoplus R_{k4})$ is given by $(\Z\oplus\Z)\oplus(\Z\oplus\Z)\ni (x,y,z,w)\mapsto (x+z,y,w)\in\Z\oplus\Z\oplus\Z$ (using standard generators), one can then check that $\FK_{124}(\Cone_\phi)\oplus\FK_{134}(\Cone_\phi)\to\FK_{1234}(\Cone_\phi)$ is an isomorphism.
	Hence $\Cone_\phi$ and $R_{12344}$ are $\KK(X_1)$-equivalent by Lemma~\ref{thankyourasmus}.
	
	Therefore $\pi$ and $\iota$ induce natural transformations, so we may write $\bar\iota=nf^4$ with $f^4$ generating $\NT(12344,4)$ and $\bar\pi=(n_{k4}f_{k4})$ with $f_{k4}$ generating the group $\NT(k4,12344)$.

	As $\FK_4(\iota)=n\FK_4(\hat f^4)$ is an isomorphism $\Z\to\Z[1]$, we see that $n=\pm 1$. 
	And as $\FK_k(R_{14}\oplus R_{24}\oplus R_{34})=\FK_k(R_k)$, we see that $\FK_k(\pi)=n_{k4}\FK_k(\hat f_{k4})$, so as $\FK_k(\pi)$ is an isomorphism $\Z\to\Z$, we see that $n_{k4}=\pm 1$.
\end{proof}
\end{lemma}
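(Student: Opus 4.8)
The plan is to mirror the construction carried out for Lemma~\ref{kegletrick1}, but now assembling $R_{12344}$ from the ``bottom'' of Diagram~\eqref{diagram1} rather than from its ``top''. First I would let $\phi\colon R_{14}\oplus R_{24}\oplus R_{34}\to M_3(R_4)$ be the $X_1$-equivariant \shom{} given by restriction to subsets, arranged so that $\bar\phi=(i_4^{14},i_4^{24},i_4^{34})$. Forming the mapping cone $\Cone_\phi$ of $\phi$ yields a mapping cone triangle
\[ \xymatrix@!0@C=40pt@R=35pt{
\Cone_\phi\ar[dr]_-{\pi} & & R_{4}\ar[ll]_-{\iota}|\circ \\
& R_{14}\oplus R_{24}\oplus R_{34}\ar[ur]_-{\phi} &
}
\]
which is distinguished in $\kk(X_1)$ by~\cite[3.11]{meyernest_boot}. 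The entire strategy is then to recognise $\Cone_\phi$ as being $\KK(X_1)$-equivalent to $R_{12344}$ by verifying the two hypotheses of Lemma~\ref{thankyourasmus} and transporting the triangle along that equivalence.

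For the first hypothesis I would apply $\FK_Y=\KK_*(X_1;R_Y,-)$ to the triangle and read off $\FK_Y(\Cone_\phi)$ from the resulting long exact sequences. Consulting the explicit groups in Diagram~\eqref{diagramgroups}, one computes that $\FK_{ij4}(\phi)$ and $\FK_{1234}(\phi)$ are the surjections $\Z\oplus\Z\ni(x,y)\mapsto x+y\in\Z$ and $\Z\oplus\Z\oplus\Z\ni(x,y,z)\mapsto x+y+z\in\Z$ respectively, so exactness forces $\FK_Y(\Cone_\phi)\cong\FK_Y(R_{12344})$ for every $Y\in\LC(X_1)^*$. Along the way one records, for use in the final step, that $\FK_4(\iota)$ and each $\FK_k(\pi)$ are isomorphisms while $\FK_{ij4}(\pi)$ and $\FK_{1234}(\pi)$ are injective.

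The second, more delicate, hypothesis of Lemma~\ref{thankyourasmus} demands that $\FK_{124}(\Cone_\phi)\oplus\FK_{134}(\Cone_\phi)\to\FK_{1234}(\Cone_\phi)$ be an isomorphism. Here I would exploit that $\FK_Y(\pi)$ intertwines the natural transformations, together with the standard-generator description of the transformation $\FK_{124}(\bigoplus R_{k4})\oplus\FK_{134}(\bigoplus R_{k4})\to\FK_{1234}(\bigoplus R_{k4})$, namely $(x,y,z,w)\mapsto(x+z,y,w)$, in order to transport the comparison map to a manifest isomorphism of free abelian groups. I expect this to be the main obstacle, since it requires tracking how the $\NT$-module structure interacts with the connecting maps of the cone; but this is precisely the bookkeeping that Lemma~\ref{thankyourasmus} was engineered to absorb. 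With both hypotheses in hand, Lemma~\ref{thankyourasmus} supplies the $\KK(X_1)$-equivalence between $\Cone_\phi$ and $R_{12344}$, so the triangle above may be rewritten with $R_{12344}$ in place of $\Cone_\phi$.

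Finally, because $\Cone_\phi$ and $R_{12344}$ are $\KK(X_1)$-equivalent, the maps $\pi$ and $\iota$ represent genuine natural transformations, and I would write $\bar\iota=nf^4$ and $\bar\pi=(n_{k4}f_{k4})$ in terms of generators $f^4$ of $\NT(12344,4)$ and $f_{k4}$ of $\NT(k4,12344)$. Evaluating on the relevant functors, $\FK_4(\iota)=n\,\FK_4(\hat f^4)$ is an isomorphism $\Z\to\Z[1]$, forcing $n=\pm1$; and since $\FK_k(R_{14}\oplus R_{24}\oplus R_{34})=\FK_k(R_k)$, the identity $\FK_k(\pi)=n_{k4}\,\FK_k(\hat f_{k4})$ exhibits an isomorphism $\Z\to\Z$, forcing each $n_{k4}=\pm1$. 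This gives exactly the asserted normalisations of $\bar\iota$ and $\bar\pi$, completing the argument.
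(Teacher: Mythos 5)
Your proposal follows the paper's own proof step for step: the same mapping cone of $\phi\colon R_{14}\oplus R_{24}\oplus R_{34}\to M_3(R_4)$, the same verification of the two hypotheses of Lemma~\ref{thankyourasmus} via the explicit formulas for $\FK_{ij4}(\phi)$ and $\FK_{1234}(\phi)$, and the same normalisation of $\bar\iota$ and $\bar\pi$ using that $\FK_4(\iota)$ and $\FK_k(\pi)$ are isomorphisms. It is correct and essentially identical to the argument in the paper.
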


\begin{lemma} \label{gunnarsseqs}
	There exist natural transformations $f_{14},f_{24},f_{34},f^{124},f^{134},f^{234}$ 
	such that $\left<f_{k4}\right>=\NT(k4,12344)$ and $\left<f^{ij4}\right>=\NT(12344,ij4)$ and such that the sequences
\[ \xymatrix@!0@C=60pt@R=35pt{
\FK_{1234}(A)\ar[rr]^-{f_{m4} i_4^{m4}\delta_n^4r_{1234}^n}|\circ & & \FK_{12344}(A)\ar[dl]^-{(f^{ij4})} \\
& \FK_{124}(A)\oplus\FK_{134}(A)\oplus\FK_{234}(A)\ar[ul]^-{(i_{ij4}^{1234})} & 
} 
\] and \[
\xymatrix@!0@C=60pt@R=35pt{
\FK_{12344}(A)\ar[rr]^-{\delta_m^4r_{mn4}^mf^{mn4}}|\circ & & \FK_4(A)\ar[dl]^-{(i_4^{k4})} \\
& \FK_{14}(A)\oplus\FK_{24}(A)\oplus\FK_{34}(A)\ar[ul]^-{(f_{k4})} & 
} 
\]	
	are exact for all \csa s $A$ over $X_1$ and all $m,n\in\{1,2,3\}$ with $m\neq n$.
\begin{proof}
	This follows from Lemmas~\ref{kegletrick1} and~\ref{kegletrick2} by applying $\KK_*(X_1;-,A)$ and using that by the Diagram~\eqref{diagram1} the transformation $f_{m4} i_4^{m4}\delta_n^4r_{1234}^n$ generates the group $\NT(1234,12344)$ and $\delta_m^4r_{mn4}^mf^{mn4}$ generates $\NT(12344,4)$.
\end{proof}
\end{lemma}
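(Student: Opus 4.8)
The plan is to apply the contravariant cohomological functor $\KK_*(X_1;-,A)$ to the two exact triangles constructed in Lemmas~\ref{kegletrick1} and~\ref{kegletrick2} and to read off the resulting long exact sequences. Recall that $\KK_*(X_1;R_Y,A)$ is naturally isomorphic to $\FK_Y(A)$, and that $\KK_*(X_1;-,A)$ turns a distinguished triangle in $\kk(X_1)$ into a long exact sequence. Since $\KK_*$ is $\Z/2$-graded and the suspension acts as the degree shift, this long exact sequence folds up into exactly the three-term cyclic (triangular) sequence displayed in the statement, with the degree shift carried by the boundary edge (whence the $|\circ$ marking on it). Applying the functor to the triangle of Lemma~\ref{kegletrick1} thus produces a cyclic exact sequence on the vertices $\FK_{1234}(A)$, $\bigoplus\FK_{ij4}(A)$ and $\FK_{12344}(A)$, and applying it to the triangle of Lemma~\ref{kegletrick2} produces one on $\FK_{12344}(A)$, $\bigoplus\FK_{k4}(A)$ and $\FK_4(A)$.

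Next I would identify the three edge maps in each sequence. Two of them come for free: by functoriality together with the Yoneda identification $\NT(Y,Z)=\KK_*(X_1;R_Z,R_Y)$, the maps induced by $\phi$ and $\iota$ in the first triangle (resp.\ by $\phi$ and $\pi$ in the second) are precisely the natural transformations $\bar\phi$ and $\bar\iota$ (resp.\ $\bar\phi$ and $\bar\pi$) evaluated at $A$. By Lemmas~\ref{kegletrick1} and~\ref{kegletrick2} these are $(i_{ij4}^{1234})$ and $(f^{ij4})$ in the first sequence and $(i_4^{k4})$ and $(f_{k4})$ in the second, matching the labels in the displayed diagrams once the $f^{ij4}$ are taken to be the generators of $\NT(12344,ij4)$ supplied by Lemma~\ref{kegletrick1} and the $f_{k4}$ the generators of $\NT(k4,12344)$ supplied by Lemma~\ref{kegletrick2}.

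It then remains to treat the two connecting maps. For the first sequence the connecting map is $\bar\pi$, which by Lemma~\ref{kegletrick1} generates the infinite cyclic group $\NT(1234,12344)$; for the second it is $\bar\iota$, which by Lemma~\ref{kegletrick2} generates $\NT(12344,4)$. I would then observe, by composing the elementary transformations along Diagram~\eqref{diagram1}, that the explicit composites $f_{m4}\, i_4^{m4}\,\delta_n^4\, r_{1234}^n$ and $\delta_m^4\, r_{mn4}^m\, f^{mn4}$ are themselves generators of $\NT(1234,12344)$ and $\NT(12344,4)$, respectively, for every choice of $m\neq n$. Being generators of the same infinite cyclic group, each composite agrees with the corresponding connecting map up to a factor $\pm1$; since multiplying a single edge map of an exact sequence by a unit does not affect exactness, replacing $\bar\pi$ and $\bar\iota$ by these composites leaves both sequences exact, for all \csa s $A$ over $X_1$ and all $m\neq n$.

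The main obstacle is this last identification: verifying that the composites $f_{m4}\, i_4^{m4}\,\delta_n^4\, r_{1234}^n$ and $\delta_m^4\, r_{mn4}^m\, f^{mn4}$ are generators, and not proper multiples, of the relevant cyclic $\NT$-groups, uniformly in $m\neq n$. This is precisely where the ring structure encoded in Diagram~\eqref{diagram1} is needed: one must track how the generators $f_{k4}$ and $f^{ij4}$ compose with the restriction, extension and boundary transformations, for instance by evaluating the composites on the representing algebras $R_Y$ and on $R_{12344}$ and comparing with the explicit groups in Diagram~\eqref{diagramgroups}, so as to pin each composite down up to sign rather than merely up to a nonzero integer multiple.
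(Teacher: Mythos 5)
Your proposal is correct and follows essentially the same route as the paper: apply $\KK_*(X_1;-,A)$ to the triangles of Lemmas~\ref{kegletrick1} and~\ref{kegletrick2}, identify the two non-connecting edges via the Yoneda identification, and replace the connecting maps $\bar\pi$ and $\bar\iota$ by the composites $f_{m4}i_4^{m4}\delta_n^4r_{1234}^n$ and $\delta_m^4r_{mn4}^mf^{mn4}$, which Diagram~\eqref{diagram1} shows to be generators of the same infinite cyclic groups and hence to agree with the connecting maps up to sign, preserving exactness. You have correctly isolated the only substantive point (that these composites are genuine generators, not proper multiples), which is exactly what the paper's one-line proof appeals to.
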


\subsection{A classification result} \label{class_first}

\begin{proposition} \label{gunnarefren}
	Let $A$ and $B$ be \csa s over $X_1$ and 
	assume that the maps $\delta_m^4\colon\FK_m^{n}(A)\to\FK_4^{1-n}(A)$ and  $\delta_m^4\colon\FK_m^{n}(B)\to\FK_4^{1-n}(B)$ vanish for some $m\in\{1,2,3\}$ and some
	 $n\in\{0,1\}$.
	Then any homomorphism $\phi\colon\FK(A)\to\FK(B)$ can be uniquely extended to a homomorphism $\phi'\colon\FK'(A)\to\FK'(B)$.
	Furthermore, if $\phi$ is an isomorphism then so is $\phi'$.
\begin{proof}
	Let $\phi\colon\FK(A)\to\FK(B)$ be a homomorphism.	  
	We may extend it by defining $\phi_{12344}\colon\FK_{12344}(A)\to\FK_{12344}(B)$ by the following diagrams:
\[ \xymatrix{
	0\ar[r] & \FK_{12344}^{1-n}(A)\ar[r]^-{(f^{ij4})}\ar@{..>}[d]^-{\phi_{12344}^{1-n}} & \FK_{124}^{1-n}(A)\oplus\FK_{134}^{1-n}(A)\oplus\FK_{234}^{1-n}(A)\ar[r]^-{(i_{ij4}^{1234})}\ar[d]^-{\phi_{124}^{1-n}\oplus\phi_{134}^{1-n}\oplus\phi_{234}^{1-n}} & \FK_{1234}^{1-n}(A)\ar[d]^-{\phi_{1234}^{1-n}} \\
	0\ar[r] & \FK_{12344}^{1-n}(B)\ar[r]^-{(f^{ij4})} & \FK_{124}^{1-n}(B)\oplus\FK_{134}^{1-n}(B)\oplus\FK_{234}^{1-n}(B)\ar[r]^-{(i_{ij4}^{1234})} & \FK_{1234}^{1-n}(B)
} \]
\[ \xymatrix{
	\FK_4^n(A)\ar[r]^-{(i_4^{k4})}\ar[d]^-{\phi_4^n} & \FK_{14}^n(A)\oplus\FK_{24}^n(A)\oplus\FK_{34}^n(A)\ar[r]^-{(f_{k4})}\ar[d]^-{\phi_{14}^n\oplus\phi_{24}^n\oplus\phi_{34}^n} & \FK_{12344}^n(A)\ar[r]\ar@{..>}[d]^-{\phi_{12344}^n} & 0 \\ 
		\FK_4^n(B)\ar[r]^-{(i_4^{k4})} & \FK_{14}^n(B)\oplus\FK_{24}^n(B)\oplus\FK_{34}^n(B)\ar[r]^-{(f_{k4})} & \FK_{12344}^n(B)\ar[r] & 0 
} \]
	By Lemma~\ref{gunnarsseqs} the four horizontal sequences in the diagrams are exact, hence $\phi_{12344}$ is well-defined and is bijective if $\phi$ is an isomorphism. 
	
	By construction $\phi_{12344}^n$ respects the natural transformations $f_{k4}$ and $\phi_{12344}^{1-n}$ respects the naturals transformations $f^{ij4}$.
	Since $(f^{ij4})$ is injective on $\FK^{1-n}_{12344}(B)$ and since $(f_{k4})$ is surjective on $\FK^n_{12344}(A)$, 
	it suffices to check that
	\[ (f^{ij4})\phi^{1-n}_{12344}f_{k4} = (f^{ij4})f_{k4}\phi^{1-n}_{k4} \quad\text{and}\quad  \phi^n_{ij4}f^{ij4}(f_{k4}) = f^{ij4}\phi^n_{12344}(f_{k4}). \]  
And 	this holds by construction of $\phi_{12344}$ as
	\[ f^{ij4}f_{k4}\phi_{k4} = \phi_{ij4}f^{ij4}f_{k4} \]
	since $f^{ij4}f_{k4}\in\NT(k4,ij4)$.
	
	Since the natural transformations in $\FK'$ are generated by the natural transformations in $\FK$ together with the natural transformations $f_{k4}$ and $f^{ij4}$, we see that the extended $\phi$ respects all the natural transformations in $\FK'$, hence it is an $\NT'$-morphism between $\FK'(A)$ and $\FK'(B)$.
\end{proof}
\end{proposition}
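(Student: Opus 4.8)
The plan is to observe that the only datum missing from $\phi$ is a pair of homomorphisms $\phi_{12344}^{0},\phi_{12344}^{1}\colon\FK_{12344}(A)\to\FK_{12344}(B)$, and that the only natural transformations present in $\FK'$ but absent from $\FK$ are the maps $f_{k4}\colon\FK_{k4}\to\FK_{12344}$ and $f^{ij4}\colon\FK_{12344}\to\FK_{ij4}$ supplied by Lemmas~\ref{kegletrick1} and~\ref{kegletrick2}. Since $\NT'$ is generated by $\NT$ together with these, the whole problem reduces to constructing $\phi_{12344}$ and checking that it intertwines the two new families.

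First I would cash in the vanishing hypothesis. The transformations $f_{k4}$ and $f^{ij4}$ sit in the two cyclic exact sequences of Lemma~\ref{gunnarsseqs}, whose connecting maps are $f_{p4}i_4^{p4}\delta_q^4 r_{1234}^q$ and $\delta_p^4 r_{pq4}^p f^{pq4}$, and hence factor through a boundary map $\delta_\bullet^4$. As these sequences hold for every distinct $p,q\in\{1,2,3\}$, I would match the boundary index to the given $m$, taking $q=m$ in the first sequence and $p=m$ in the second. Because $\delta_m^4\colon\FK_m^n\to\FK_4^{1-n}$ vanishes for both $A$ and $B$, the first connecting map vanishes on $\FK_{1234}^n$ and the second on $\FK_{12344}^n$; unwinding the two six-term sequences then makes $(f^{ij4})$ injective out of $\FK_{12344}^{1-n}$ and $(f_{k4})$ surjective onto $\FK_{12344}^{n}$, for both algebras.

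With these two facts I would define $\phi_{12344}$ one degree at a time. In degree $1-n$ the injectivity of $(f^{ij4})$ identifies $\FK_{12344}^{1-n}$ with $\ker(i_{ij4}^{1234})\subset\bigoplus\FK_{ij4}^{1-n}$; since $\phi$ respects $i_{ij4}^{1234}$, the map $\bigoplus\phi_{ij4}^{1-n}$ carries this kernel into the corresponding kernel for $B$, and I let $\phi_{12344}^{1-n}$ be the induced map. Dually, in degree $n$ the surjectivity of $(f_{k4})$ presents $\FK_{12344}^{n}$ as a quotient of $\bigoplus\FK_{k4}^{n}$ by $\im(i_4^{k4})$; as $\phi$ respects $i_4^{k4}$, the map $\bigoplus\phi_{k4}^{n}$ descends, and I let $\phi_{12344}^{n}$ be that descent. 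Both definitions are forced by the demand that $\phi'$ respect $f^{ij4}$ (respectively $f_{k4}$) in the given degree, using injectivity (respectively surjectivity), which yields uniqueness; and if $\phi$ is an isomorphism, a four-lemma chase on the left-exact and right-exact rows of Lemma~\ref{gunnarsseqs} shows $\phi_{12344}$ is bijective.

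The step I expect to be the real work is confirming that $\phi_{12344}$ respects \emph{all} new transformations, not merely those used in its definition. The same-degree intertwinings hold by construction, but the cross-degree ones — that $\phi_{12344}^{1-n}$ commutes with $f_{k4}$ and $\phi_{12344}^{n}$ with $f^{ij4}$ — are not automatic. My plan is to reduce each to an identity about old transformations: to verify $\phi_{12344}^{1-n}f_{k4}=f_{k4}\phi_{k4}^{1-n}$ I would post-compose with the injective $(f^{ij4})$, move $\phi_{12344}^{1-n}$ past $f^{ij4}$ by the same-degree relation, and land on the composite $f^{ij4}f_{k4}$, which lies in $\NT(k4,ij4)$ and is therefore a transformation of the \emph{unrefined} invariant that $\phi$ already respects; the dual identity is settled by pre-composing with the surjective $(f_{k4})$. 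This collapse of $f^{ij4}f_{k4}$ to an old natural transformation is the crux, and once it is in place the statement follows.
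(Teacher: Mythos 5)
Your proposal is correct and follows essentially the same route as the paper's proof: you use the vanishing of $\delta_m^4$ to turn the two cyclic sequences of Lemma~\ref{gunnarsseqs} into a left-exact presentation of $\FK_{12344}^{1-n}$ as $\ker(i_{ij4}^{1234})$ and a right-exact presentation of $\FK_{12344}^{n}$ as $\cok(i_4^{k4})$, define $\phi_{12344}$ degreewise from these, and settle the cross-degree compatibilities by reducing to the composite $f^{ij4}f_{k4}\in\NT(k4,ij4)$, which is exactly the paper's key step.
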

\begin{observation} \label{realrankzero}
A separable, nuclear, purely infinite, tight \csa\ $A$ over a finite $T_0$-space $X$ is of real rank zero if and only if the boundary map $\delta_{Y\setminus U}^U$ vanishes on $\K_0(A(Y\setminus U))$ for all $Y\in\LC(X)$ and all $U\in\Op(Y)$.
This follows from the fact that all Kirchberg algebras have real rank zero combined with the following result of L.~G.~Brown and G.~K.~Pedersen, cf.~\cite[3.14]{brownpedersen}:  Given an extension $I\into B\onto B/I$ of \csa s, $B$ has real rank zero if and only if $I$ and $B/I$ have real rank zero and projections in $B/I$ lift to projections in $B$.
\end{observation}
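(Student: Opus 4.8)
The plan is to deduce both implications from the Brown--Pedersen criterion quoted above, applied to the $X$-equivariant extensions $A(U)\into A(Y)\onto A(Y\setminus U)$, after translating the lifting condition into the language of boundary maps. The first ingredient I would isolate is a reformulation: for a \csa{} extension $I\into B\onto B/I$ in which both $I$ and $B/I$ have real rank zero, every projection in $B/I$ lifts to a projection in $B$ if and only if the index/exponential boundary map $\delta\colon\K_0(B/I)\to\K_1(I)$ vanishes. The forward implication is immediate from exactness of the six-term sequence: the class of a liftable projection lies in the image of $\K_0(B)$, hence in $\ker\delta$, and since $B/I$ has real rank zero its $\K_0$-group is generated by classes of projections, so $\delta$ vanishes identically. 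For the reverse implication one lifts a projection $p$ to a self-adjoint element of $B$ over $p$; the vanishing of $\delta([p])$ removes the $\K_1(I)$-obstruction, and real rank zero of $I$ then permits this self-adjoint lift to be perturbed within $B$ to an actual projection over $p$.

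With this in hand, the direction ``$A$ of real rank zero $\Rightarrow$ all $\delta_{Y\setminus U}^U$ vanish on $\K_0$'' is the easy one. Real rank zero passes to the ideal $A(U')$ and, by the Brown--Pedersen criterion itself, to each quotient, so every subquotient $A(Y)$, $A(U)$ and $A(Y\setminus U)$ has real rank zero. Applying Brown--Pedersen to $A(U)\into A(Y)\onto A(Y\setminus U)$ shows that projections in $A(Y\setminus U)$ lift, and the reformulation then forces $\delta_{Y\setminus U}^U$ to vanish on $\K_0(A(Y\setminus U))$.

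For the converse I would argue by induction on the cardinality of $Y\in\LC(X)$ that $A(Y)$ has real rank zero, the base case being the singletons $A(x)$, which are Kirchberg algebras and hence of real rank zero. In the inductive step choose a proper nonempty $U\in\Op(Y)$; then $A(U)$ and $A(Y\setminus U)$ have strictly fewer points in their spectra and are of real rank zero by induction, the hypothesis supplies vanishing of $\delta_{Y\setminus U}^U$ on $\K_0$, the reformulation yields lifting of projections, and Brown--Pedersen gives real rank zero of $A(Y)$. Taking $Y=X$ completes the proof.

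The main obstacle is the reverse half of the reformulation lemma---passing from $\delta([p])=0$ to an honest projection lift of \emph{every} projection $p$, not merely at the level of $\K_0$-classes---which is precisely where real rank zero of the ideal (guaranteed here by pure infiniteness of the subquotients) is indispensable. The remainder is bookkeeping over the finite lattice $\Op(X)$ to organize the induction so that at each stage one only invokes a two-sided extension whose ideal and quotient are already known to be of real rank zero.
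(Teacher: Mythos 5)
Your strategy is the one the paper intends (its own justification is a single sentence citing Brown--Pedersen and the real rank zero of Kirchberg algebras): translate projection lifting into vanishing of the exponential map, handle the easy direction via Brown--Pedersen, and run an induction over the finite lattice $\LC(X)$ for the converse. The inductive scaffolding and the forward direction are fine. The genuine soft spot is exactly the step you flag as the main obstacle, and your proposed justification for it does not quite work as stated. For an extension $I\into B\onto B/I$ with a positive contractive self-adjoint lift $a$ of a projection $p$, the hypothesis $\delta([p])=0$ only says that the obstruction unitary $u=\exp(2\pi ia)\in\widetilde I$ is trivial in $\K_1(I)$, i.e.\ that $u\oplus 1_{n-1}$ is connected to the identity in $U_n(\widetilde I)$ for some $n$; this a priori lifts only $p\oplus 0_{n-1}$ to a projection in $M_n(B)$. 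To lift $p$ itself one needs $u$ to be connected to $1$ in $U(\widetilde I)$, i.e.\ $\K_1$-injectivity of $\widetilde I$, and real rank zero of $I$ alone is not known to supply this. In the present setting the gap is closed by pure infiniteness rather than by real rank zero: ideals and quotients of purely infinite algebras are purely infinite, and for separable, nuclear, strongly purely infinite (hence $\mathcal{O}_\infty$-absorbing) algebras the required $\K_1$-injectivity holds; alternatively one may simply invoke the theorem of Pasnicu and R{\o}rdam that a purely infinite \csa{} with zero-dimensional primitive ideal space has real rank zero if and only if all exponential maps vanish, which is precisely this observation since every open subset of a finite space is compact open.

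A second, smaller error: your parenthetical asserting that real rank zero of the ideal is ``guaranteed here by pure infiniteness of the subquotients'' is backwards. Pure infiniteness does not imply real rank zero for non-simple algebras --- if it did, the observation would be vacuous and Theorem~\ref{thm2} impossible. In your induction, real rank zero of $A(U)$ and $A(Y\setminus U)$ comes from the induction hypothesis, with the base case supplied by Zhang's theorem that \emph{simple} purely infinite \csa s have real rank zero; pure infiniteness is needed instead for the $\K_1$-injectivity point above. Finally, in the forward direction note that ``$\K_0(B/I)$ is generated by projections'' means projections in matrix algebras over $B/I$, so you should apply Brown--Pedersen to the amplified extensions $M_n(I)\into M_n(B)\onto M_n(B/I)$, which is harmless since real rank zero is invariant under matrix amplification.
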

\begin{corollary} \label{isoslift1}
	Let $A$ and $B$ be \csa s in the bootstrap class over $X_1$ and with $A$ of real rank zero.
	Then any isomorphism between $\FK(A)$ and $\FK(B)$ lifts to a $\KK(X_1)$-equivalence.
\begin{proof}
	Since $A$ is of real rank zero, $\delta_2^4\colon\FK_2^0(A)\to\FK_4^1(A)$ vanishes by~\cite[3.14]{brownpedersen}, and since $\FK(A)$ and $\FK(B)$ are isomorphic,
	 $\delta_2^4\colon\FK_2^0(B)\to\FK_4^1(B)$ also vanishes. 
	By Proposition~\ref{gunnarefren} the isomorphism therefore lifts to an isomorphism between $\FK'(A)$ and $\FK'(B)$, and by~\cite[5.14]{filtrated} this lifts to a $\KK(X_1)$-equivalence.
	\end{proof}
\end{corollary}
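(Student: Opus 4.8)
The plan is to deduce the corollary from the Universal Coefficient Theorem for the refined invariant $\FK'$ proved in \cite[5.14]{filtrated}, by promoting the given $\NT$-module isomorphism $\phi\colon\FK(A)\to\FK(B)$ to an $\NT'$-module isomorphism $\FK'(A)\to\FK'(B)$. The bridge is Proposition~\ref{gunnarefren}: it upgrades any $\FK$-homomorphism to an $\FK'$-homomorphism, and isomorphisms to isomorphisms, \emph{provided} a single boundary map $\delta_m^4$ vanishes on the appropriate $\K_0$-group of both $A$ and $B$. So the entire content of the corollary is to produce that one vanishing condition from the real rank zero hypothesis, and then invoke the two cited results in turn.

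First I would extract the vanishing for $A$. By Observation~\ref{realrankzero}, which rests on the Brown-Pedersen criterion \cite[3.14]{brownpedersen}, real rank zero of $A$ forces every boundary map $\delta_{Y\setminus U}^U$ to vanish on $\K_0$; in particular $\delta_2^4\colon\FK_2^0(A)\to\FK_4^1(A)$ is zero. This is exactly the hypothesis of Proposition~\ref{gunnarefren} in the case $m=2$, $n=0$, but so far only for $A$.

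Next I would transfer this to $B$ using naturality. Since $\phi$ is an isomorphism of $\NT$-modules it commutes with the natural transformation $\delta_2^4\in\NT(2,4)$, so in the relevant degree $\phi_4^1\circ\delta_2^4=\delta_2^4\circ\phi_2^0$, where the left occurrence of $\delta_2^4$ is computed on $A$ and the right on $B$. The left-hand side vanishes because $\delta_2^4$ vanishes on $\FK_2^0(A)$; as $\phi_2^0$ is surjective, this forces $\delta_2^4\colon\FK_2^0(B)\to\FK_4^1(B)$ to vanish too. Now both $A$ and $B$ satisfy the hypothesis of Proposition~\ref{gunnarefren} with $m=2$, $n=0$, so $\phi$ extends to an isomorphism $\phi'\colon\FK'(A)\to\FK'(B)$; since $A$ lies in $\B(X_1)$, the UCT \cite[5.14]{filtrated} then lifts $\phi'$ to a $\KK(X_1)$-equivalence.

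I expect essentially no obstacle inside the corollary itself once the preceding machinery is granted; the only delicate point is the naturality transfer, which is immediate precisely because $\delta_2^4$ is one of the transformations that any $\NT$-module isomorphism must respect. The genuine difficulty lies upstream, in Proposition~\ref{gunnarefren}, where the vanishing of $\delta_2^4$ is exactly what makes the exact sequences of Lemma~\ref{gunnarsseqs} reconstruct the extra functor $\FK_{12344}$ canonically from the data of $\FK$; without that vanishing the extension of $\phi$ to $\phi'$ need not exist, consistent with the counterexamples that render $\FK$ alone insufficient.
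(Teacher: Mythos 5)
Your proposal is correct and follows essentially the same route as the paper: real rank zero plus the Brown--Pedersen criterion gives the vanishing of $\delta_2^4$ on $\FK_2^0(A)$, naturality of $\delta_2^4$ under the $\NT$-module isomorphism transfers this to $B$, and then Proposition~\ref{gunnarefren} and the UCT of~\cite[5.14]{filtrated} finish the argument. The only difference is that you spell out the naturality transfer explicitly, which the paper leaves implicit.
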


\begin{definition}
Let $A$ and $B$ be unital \csa s over $X$.  Then $ \phi \ : \ \FK ( A ) \rightarrow \FK ( B ) $ is a \emph{homomorphism that preserves the unit} if $\phi$ is a homomorphism of $\NT$-modules and $\phi_{X} ( [ 1_{A} ] ) = [ 1_{B} ]$ in $\FK_{X} ( A ) = \FK_{X} ( B )$.  We say that $\phi$ is an \emph{isomorphism that preserves the unit} if $\phi$ is an isomorphism of $\NT$-modules that preserves the unit.
\end{definition}

Combining this with~\cite[4.3]{kirchberg} and~\cite[2.1,3.2]{restorffruiz}, we obtain the following corollary.
\begin{corollary} \label{classi1}
	Let $A$ and $B$ be separable, nuclear, purely infinite \csa s that are tight over $X_1$ and whose simple subquotients lie in the bootstrap class.  Assume that $A$ has real rank zero.
\begin{itemize}	
\item[(1)]  If $A$ and $B$ are stable, then every isomorphism from $\FK(A)$ to $\FK(B)$ can be lifted to a \siso{} from $A$ to $B$. 

\item[(2)] If $A$ and $B$ are unital, then every isomorphism from $\FK(A)$ to $\FK(B)$ that preserves the unit can be lifted to a \siso{} from $A$ to $B$. 
\end{itemize}
\end{corollary}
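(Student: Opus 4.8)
The plan is to combine Corollary~\ref{isoslift1} with the two cited external lifting results. First consider the stable case~(1). Since $A$ and $B$ are separable and nuclear with all simple subquotients in the bootstrap class of Rosenberg--Schochet, \cite[4.13]{meyernest_boot} places both of them in the Meyer--Nest bootstrap class $\B(X_1)$. Given an isomorphism $\phi\colon\FK(A)\to\FK(B)$, the real rank zero hypothesis on $A$ lets me apply Corollary~\ref{isoslift1} to lift $\phi$ to a $\KK(X_1)$-equivalence $A\to B$. Because $A$ and $B$ are stable, separable, nuclear, purely infinite and tight over $X_1$, so that $\Prim(A)\cong\Prim(B)\cong X_1$, Kirchberg's lifting theorem~\cite[4.3]{kirchberg} then upgrades this $\KK(X_1)$-equivalence to an $X_1$-equivariant $\siso$, which is in particular a $\siso$ inducing $\phi$ on $\FK$. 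This settles~(1).

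For the unital case~(2), I would pass to stabilisations and invoke the unital-from-stable machinery of~\cite[2.1,3.2]{restorffruiz}. The stabilisations $A\otimes\mathbb{K}$ and $B\otimes\mathbb{K}$ are again separable, nuclear, purely infinite and tight over $X_1$ with simple subquotients in the bootstrap class, and since real rank zero is preserved under stabilisation, $A\otimes\mathbb{K}$ has real rank zero. Part~(1) therefore strongly classifies this class of stabilisations by $\FK$. Theorem~2.1 of~\cite{restorffruiz} converts a strong classification up to stable isomorphism into a strong classification of the corresponding unital, properly infinite algebras by the same invariant together with the class of the unit; feeding in the invariant $\FK$ and identifying the unit class in $\FK^0_{1234}(A)=\K_0(A)$ as in~\cite[3.2]{restorffruiz} then shows that a unit-preserving isomorphism $\FK(A)\to\FK(B)$ lifts to a unital $\siso$ $A\to B$.

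One clean simplification worth recording is that the asymmetry of the hypotheses, namely that only $A$ is assumed of real rank zero, is harmless: by Observation~\ref{realrankzero}, $A$ has real rank zero exactly when every boundary map $\delta_{Y\setminus U}^U$ vanishes on $\K_0(A(Y\setminus U))$, and since these boundary maps are natural transformations in $\NT$, an isomorphism $\FK(A)\cong\FK(B)$ transports this vanishing to $B$, so $B$ is automatically of real rank zero as well. This makes the real rank zero class symmetric and guarantees that the hypotheses of the~\cite{restorffruiz} reduction are met.

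The main obstacle is not in~(1), which is a bookkeeping combination of existing lifting theorems, but in correctly invoking the unital reduction in~(2): one must verify that the class of unital real rank zero algebras tight over $X_1$ satisfies the structural hypotheses of~\cite[2.1]{restorffruiz} and, crucially, that the notion of a unit-preserving $\FK$-isomorphism, matching $[1_A]$ and $[1_B]$ in $\FK^0_{1234}$, corresponds exactly to the scaling datum required by that theorem. Checking this compatibility of the unit class with the invariant $\FK$, rather than with the refined invariant $\FK'$ used earlier in the section, is the step that requires the most care.
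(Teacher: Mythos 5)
Your proposal is correct and follows the paper's own route exactly: the paper derives this corollary by combining Corollary~\ref{isoslift1} with Kirchberg's theorem~\cite[4.3]{kirchberg} for the stable case and with~\cite[2.1,3.2]{restorffruiz} for the unital case, which is precisely what you do. Your added remark that the real rank zero hypothesis transports from $A$ to $B$ via Observation~\ref{realrankzero} is a correct and harmless elaboration.
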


\begin{remark} \label{Xoneop}
The space $X_4=X_1^{\op}$ has been studied in~\cite{rasmusmanuel}
where it is shown that the indecomposable transformations for $X_1^\op$ are
\[ \xymatrix@!0@C=40pt@R=35pt{
& 234\ar[r]^-{r}\ar[dr]^-{r} & 34\ar[dr]^-{r} & & 1\ar[dr]^-{i} & \\
1234\ar[ur]^-{r}\ar[r]^-{r}\ar[dr]^-{r} & 134\ar[ur]^-{r}\ar[dr]^-{r} & 24\ar[r]^-{r} & 4\ar[ur]^-{\delta}|-\circ\ar[r]^-{\delta}|-\circ\ar[dr]^-{\delta}|-\circ & 2\ar[r]^-{i} & 1234 \\
& 124\ar[r]^-{r}\ar[ur]^-{r} & 14\ar[ur]^-{r} & & 3\ar[ur]^-{i} & .
}  \]
It is straightforward to copy the methods of Meyer and Nest in~\cite{filtrated} to construct a refined filtrated $\K$-theory for which there is a UCT; for $X_1^{\op}$ the extra representing object is the mapping cone of a generator of $\NT(14,234)$.
The methods we used for the spaces $X_1$ apply to $X_1^{\op}$ as well since the boundary maps $\delta$ are placed in similar places in the structure diagrams for $\NT$ of $X_1^{\op}$.
\end{remark}

\section{Another counterexample}

Consider the space $X_2=\{1,2,3,4\}$ with $\Op(X_2)=\{\emptyset,4,34,234,134,X_2\}$.  Then $1<3$, $2<3$ and $3<4$, $\LC(X_2)^*=\{4,34,234,134,1234,3,23,13,123,1,2\}$, and its Hasse diagram is
\[ \xymatrix@!0@C=20pt@R=25pt{
1 && 2 \\
& 3\ar[ul]\ar[ur] & \\
& 4\ar[u] & .
} \]
The indecomposable transformations in the category $\NT$ have been studied in detail in~\cite[6.1.2]{rasmus} and are the maps in the following diagram:
\[ \xymatrix@!0@C=40pt@R=35pt{
 & 134\ar[r]^-{r}\ar[dr]^-{i} & 13\ar[dr]^-{i} &  & 1\ar[dr]|\circ^-{\delta} &  \\
34\ar[r]^-{r}\ar[ur]^-{i}\ar[dr]^-{i} & 3\ar[ur]^-{i}\ar[dr]^-{i} & 1234\ar[r]^-{r} & 123\ar[r]|\circ^-{\delta}\ar[ur]^-{r}\ar[dr]^-{r} & 4\ar[r]^-{i} & 34 \\
 & 234\ar[r]^-{r}\ar[ur]^-{i} & 23\ar[ur]^-{i} &  & 2\ar[ur]|\circ^-{\delta} & 
} \]
As with the first counterexample, there exists a refinement $\FK'$ of $\FK$ for which there is a UCT, cf.~\cite[6.1]{rasmus}, so for $A$ and $B$ in the bootstrap class $\B(X_2)$ one can lift an isomorphism between $\FK'(A)$ and $\FK'(B)$ to a $\KK(X_2)$-equivalence.

For $X_2$ one constructs an extra representing object $R_{12334}$ as the mapping cone of a generator of the cyclic free group $\NT(23,134)$, and its filtrated $\K$-theory is then
\[ \xymatrix@!0@C=40pt@R=35pt{
& 0\ar[r]^-{r}\ar[rd]^-{i} & \Z\ar[rd]^-{i} & & \Z\ar[dr]^-{\delta}|\circ & \\
\Z[1]\ar[ru]^-{r}\ar[r]^-{i}\ar[rd]^-{i} & 0\ar[ru]^-{i}\ar[rd]^-{i} & \Z\ar[r]^-{r} & \Z^2\ar[ru]^-{r}\ar[r]^-{\delta}|\circ\ar[rd]^-{r} & \Z[1]\ar[r]^-{i} & \Z[1] \\
& 0\ar[ru]^-{i}\ar[r]^-{r} & \Z\ar[ru]^-{i} & & \Z\ar[ur]^-{\delta}|\circ &
 } \]
where the three maps $i_{13}^{123}$, $r_{1234}^{123}$ and $i_{23}^{123}$ are given by the three coordinate embeddings $\Z\to\Z^3/(1,1,1)$, the three maps $r_{123}^1$, $\delta_{123}^4$ and $r_{123}^2$ are given by the three projections $\Z^3/(1,1,1)\to\Z^2/(1,1)$ onto coordinate hyperplanes, and the three maps $\delta_1^{34}$, $i_4^{34}$ and $\delta_2^{34}$ are the identity.

Since $\pd(\FK(R_{12334}))=1$, we see that for any \csa\ $A$ over $X_2$ that lies in the bootstrap class over $X_2$, $A$ and $R_{12334}$ will be $\KK(X_2)$-equivalent if and only if the groups $\FK_Y(A)$ and $\FK_Y(R_{12334})$ are isomorphic for all $Y\in\LC(X_2)^*$ and the natural transformation $\FK_{13}(A)\oplus\FK_{1234}(A)\to\FK_{123}(A)$ is an isomorphism, cf.~Lemma~\ref{thankyourasmus}.
The indecomposable transformations in the ring $\NT'$ fit into the following diagram:
\begin{equation}
 \xymatrix@!0@C=40pt@R=35pt{
 & 134\ar[dr] & & 13\ar[dr]^-{i} &  & 1\ar[dr]|\circ^-{\delta} &  \\
34\ar[r]^-{r}\ar[ur]^-{i}\ar[dr]^-{i} & 3\ar[r] & 12334\ar[ru]\ar[r]\ar[rd] & 1234\ar[r]^-{r} & 123\ar[r]|\circ^-{\delta}\ar[ur]^-{r}\ar[dr]^-{r} & 4\ar[r]^-{i} & 34 \\
 & 234\ar[ur] & & 23\ar[ur]^-{i} &  & 2\ar[ur]|\circ^-{\delta} & 
} \label{diagram2}
\end{equation}

\subsection{The refined invariant}

\begin{lemma} \label{kegletrick12}
	There exists an exact triangle
\[ \xymatrix@!0@C=40pt@R=35pt{
R_{123}\ar[dr]_-{\phi} & & R_{12334}\ar[ll]_-{\pi}|\circ \\
& R_{13}\oplus R_{1234}\oplus R_{23}\ar[ur]_-{\iota} & 
} \]
	satisfying that $\bar\phi = (i_{13}^{123},r_{1234}^{123},i_{23}^{123})$,
	that $\bar\pi$ generates the group $\NT(123,12334)$, and that $\bar\iota=(f^{13},f^{1234},f^{23})$ with $f^Y$ generating $\NT(12344,Y)$.
	\begin{proof}
	Let $\phi\colon R_{123}\to R_{13}\oplus R_{1234}\oplus R_{23}$ be given by inclusion respectively restrictions to subspaces, such that 
	$\bar\phi = (i_{13}^{123},r_{1234}^{123},i_{23}^{123})$. 
	The proof is similar to the proof of Lemma~\ref{kegletrick1}.  Here $\FK(R_{123})$ is used to establish that $\bar\pi$ is a generator, and $\FK_Y$ is used for $f^Y$.
	\end{proof}
\end{lemma}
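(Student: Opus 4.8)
The plan is to transport the argument of Lemma~\ref{kegletrick1} to $X_2$, the only substantive difference being that $\phi$ is now assembled from an inclusion and a restriction rather than from a uniform family of extension maps. First I would take $\phi\colon R_{123}\to R_{13}\oplus R_{1234}\oplus R_{23}$ to be induced by the evident inclusions and restrictions of the subspaces $m^{-1}(\cdot)\cap M^{-1}(\cdot)$ of $\Ch(X_2)$, arranged so that $\bar\phi=(i_{13}^{123},r_{1234}^{123},i_{23}^{123})$. Forming the mapping cone of $\phi$ gives an exact triangle
\[ \xymatrix@!0@C=40pt@R=35pt{
R_{123}\ar[dr]_-{\phi} & & \Sus\Cone_\phi\ar[ll]_-{\pi}|\circ \\
& R_{13}\oplus R_{1234}\oplus R_{23}\ar[ur]_-{\iota} &
} \]
and the whole problem becomes to identify $\Sus\Cone_\phi$ with $R_{12334}$ up to $\KK(X_2)$-equivalence, after which $\bar\pi$ and $\bar\iota$ make sense and need only be recognised as generators.

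For the identification I would apply each $\FK_Y=\KK_*(X_2;R_Y,-)$ to the triangle, compute $\FK_Y(\phi)$ on standard generators, and read off the six-term sequences to obtain $\FK_Y(\Sus\Cone_\phi)\cong\FK_Y(R_{12334})$ for every $Y\in\LC(X_2)^*$, matching the displayed filtrated $\K$-theory of $R_{12334}$. Since $\Sus\Cone_\phi$ lies in $\B(X_2)$, I may then invoke the $X_2$-version of Lemma~\ref{thankyourasmus}; the one remaining hypothesis to check is that the natural transformation $(i_{13}^{123},r_{1234}^{123})\colon\FK_{13}(\Sus\Cone_\phi)\oplus\FK_{1234}(\Sus\Cone_\phi)\to\FK_{123}(\Sus\Cone_\phi)$ is an isomorphism. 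Exactly as in Lemma~\ref{kegletrick1}, this reduces to seeing that $\FK_{13}(\iota)$, $\FK_{1234}(\iota)$, $\FK_{23}(\iota)$ are isomorphisms while $\FK_{123}(\iota)$ is surjective, and then to inserting the explicit description of $(i_{13}^{123},r_{1234}^{123})$ on $R_{13}\oplus R_{1234}\oplus R_{23}$ in terms of two of the three coordinate embeddings $\Z\to\Z^3/(1,1,1)$. This produces the $\KK(X_2)$-equivalence $\Sus\Cone_\phi\simeq R_{12334}$, whence $\bar\pi\in\NT(123,12334)$ and $\bar\iota=(f^{13},f^{1234},f^{23})\in\bigoplus_Y\NT(12334,Y)$ are defined.

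To finish I would show these are generators. Writing $\bar\iota=(n_{13}f^{13},n_{1234}f^{1234},n_{23}f^{23})$, the $\NT$-diagram of $X_2$ gives $\NT(Y',Y)=0$ for the off-diagonal pairs $Y',Y\in\{13,1234,23\}$, so $\FK_Y(R_{13}\oplus R_{1234}\oplus R_{23})=\FK_Y(R_Y)$ and hence $\FK_Y(\iota)=n_Y\FK_Y(\hat f^Y)$ is an isomorphism $\Z\to\Z$, forcing each $n_Y=\pm1$. For $\bar\pi=nf_{123}$ I would instead apply the contravariant $\KK_*(X_2;-,R_{123})$ to the triangle: because there is no nonzero natural transformation out of $\FK_{123}$ into $\FK_{13}$, $\FK_{1234}$ or $\FK_{23}$, the groups $\FK_{13}(R_{123})$, $\FK_{1234}(R_{123})$, $\FK_{23}(R_{123})$ vanish while $\FK_{123}(R_{123})=\Z$, so the sequence collapses and $\bar\pi$ acts on $R_{123}$ as an isomorphism $\Z\to\Z[1]$, giving $n=\pm1$.

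The main obstacle is the middle paragraph, specifically the verification of the direct-sum hypothesis of Lemma~\ref{thankyourasmus}. Because $\phi$ now mixes an extension map with a restriction map, the induced maps $\FK_Y(\phi)$ no longer have a single uniform shape, and one has to keep careful track of the identification $\FK_{123}\cong\Z^3/(1,1,1)$ in order to confirm that precisely two of the three coordinate embeddings survive in $\Sus\Cone_\phi$ and together span. Everything else is a formal replay of Lemma~\ref{kegletrick1}, with $\FK(R_{123})$ taking over the role that $\FK(R_{1234})$ played there in detecting that $\bar\pi$ generates.
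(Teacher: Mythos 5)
Your proposal follows exactly the route the paper takes: the paper's own proof is a two-line reduction to Lemma~\ref{kegletrick1} (form the mapping cone of the map realising $(i_{13}^{123},r_{1234}^{123},i_{23}^{123})$, identify it with $R_{12334}$ via the $X_2$-analogue of Lemma~\ref{thankyourasmus}, and detect the generators using $\FK(R_{123})$ for $\bar\pi$ and $\FK_Y$ for the $f^Y$), and your write-up is a correct, more detailed unpacking of that same argument.
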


\begin{lemma} \label{kegletrick22}
	There exists an exact triangle
\[ \xymatrix@!0@C=40pt@R=35pt{
R_{12334}\ar[dr]_-{\pi} & & R_{34}\ar[ll]_-{\iota}|\circ \\
& R_{134}\oplus R_{3}\oplus R_{234}\ar[ur]_-{\phi} & 
} 
\]
satisfying that $\bar\phi = (i_{34}^{134},r_{34}^{3},i_{34}^{234})$,
	that $\bar\iota$ generates $\NT(12334,34)$, and that $\bar\pi=(f_{134},f_{3},f_{234})$ with each $f_Y$ generating the group $\NT(Y,12344)$ respectively.
\begin{proof}
	Let $\phi\colon R_{134}\oplus R_3\oplus R_{234}\to M_3(R_{34})$ be given by inclusions respectively restriction to a subspace, such that 
	$\bar\phi = (i_{34}^{134},r_{34}^{3},i_{34}^{234})$.
	The proof is similar to the proof of Lemma~\ref{kegletrick2}.  Here $\FK_4$ is used to establish that $\bar\iota$ is a generator, and $\FK_Y$ is used for $f_Y$.
\end{proof}
\end{lemma}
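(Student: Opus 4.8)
The plan is to reproduce the mapping-cone construction of Lemma~\ref{kegletrick2}, with the symmetric triple $R_{14},R_{24},R_{34}$ and the generic point $R_{4}$ replaced by $R_{134},R_{3},R_{234}$ and $R_{34}$, which sit in the structurally analogous slots of Diagram~\eqref{diagram2}. First I would produce an honest $X_2$-equivariant \shom\ $\phi\colon R_{134}\oplus R_3\oplus R_{234}\to M_3(R_{34})$ whose three components are the two quotient maps inducing $i_{34}^{134}$ and $i_{34}^{234}$ and the inclusion inducing $r_{34}^{3}$, so that $\bar\phi=(i_{34}^{134},r_{34}^{3},i_{34}^{234})$; the matrix amplification $M_3(R_{34})$ only serves to accommodate three maps into one copy of $R_{34}$ and is invisible in $\kk(X_2)$. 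Forming $\Cone_\phi$ then yields the mapping cone triangle
\[ \Cone_\phi\xrightarrow{\ \pi\ }R_{134}\oplus R_3\oplus R_{234}\xrightarrow{\ \phi\ }R_{34}\xrightarrow{\ \iota\ }\Sus\Cone_\phi, \]
and the whole problem reduces to identifying $\Cone_\phi$ with $R_{12334}$ and then reading off $\bar\pi$ and $\bar\iota$.

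Next I would apply each $\FK_Y=\KK_*(X_2;R_Y,-)$, $Y\in\LC(X_2)^*$, to this triangle and compute $\FK_Y(\phi)$ on standard generators, in the same way that the surjections $\Z^2\to\Z$ and $\Z^3\to\Z$ are computed in Lemma~\ref{kegletrick2}. The associated six-term sequences then determine every $\FK_Y(\Cone_\phi)$, and comparing with the tabulated $\FK(R_{12334})$ should give $\FK_Y(\Cone_\phi)\cong\FK_Y(R_{12334})$ for all $Y$; at the same time one records that $\FK_{34}(\iota)$ is an isomorphism (this is the analogue of ``$\FK_4(\iota)$ is an isomorphism'' in Lemma~\ref{kegletrick2}, since it is $\FK_{34}$, not $\FK_4$, that is nonzero on $R_{34}$) and that $\FK_1(\pi)$ and $\FK_2(\pi)$ are isomorphisms. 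To invoke the $\KK(X_2)$-equivalence criterion recorded before Diagram~\eqref{diagram2} (the $X_2$-analogue of Lemma~\ref{thankyourasmus}), it remains to check that $i_{13}^{123}\oplus r_{1234}^{123}\colon\FK_{13}(\Cone_\phi)\oplus\FK_{1234}(\Cone_\phi)\to\FK_{123}(\Cone_\phi)$ is an isomorphism. As in Lemma~\ref{kegletrick2} I would do this by writing down, on standard generators, the corresponding transformation evaluated on $R_{134}\oplus R_3\oplus R_{234}$ and then transporting it across $\FK(\pi)$, using that $\FK(\pi)$ respects natural transformations. This yields $\Cone_\phi\simeq R_{12334}$ in $\KK(X_2)$, and I may henceforth write $R_{12334}$ for $\Cone_\phi$.

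Finally, since $\Cone_\phi\simeq R_{12334}$, both $\pi$ and $\iota$ induce classes in the indicated groups of natural transformations, each of which is free cyclic by Diagram~\eqref{diagram2}; so I may write $\bar\iota=n f^{34}$ with $f^{34}$ generating $\NT(12334,34)$, and $\bar\pi=(n_{134}f_{134},n_3 f_3,n_{234}f_{234})$ with each $f_Y$ generating $\NT(Y,12334)$. Evaluating $\FK_{34}$ on $\iota$ gives $\FK_{34}(\iota)=n\,\FK_{34}(\hat f^{34})$, an isomorphism of copies of $\Z$, so $n=\pm1$ and $\bar\iota$ is a generator. Evaluating $\FK_1$ on $\pi$, and using that $1$ is the closed point of $134$ so that $\FK_1(R_3)=\FK_1(R_{234})=0$, gives $\FK_1(\pi)=n_{134}\FK_1(\hat f_{134})$, an isomorphism $\Z\to\Z$, whence $n_{134}=\pm1$; symmetrically $\FK_2(\pi)$ forces $n_{234}=\pm1$. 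The main obstacle I expect lies in two places. The first is the middle coefficient $n_3$: since $\FK_3(R_{12334})=0$, no single functor isolates $f_3$ (every $Z$ with $\FK_Z(R_3)\neq0$ and $\FK_Z(R_{12334})\neq0$ also has $\FK_Z(R_{134})\neq0$ or $\FK_Z(R_{234})\neq0$), so $n_3$ must be extracted from $\FK_{13}(\pi)\colon\Z\to\Z\oplus\Z$ after subtracting off the contribution of the already-determined $f_{134}$, via the relations carried by $r_{134}^{13}$ and $i_3^{13}$. The second, heavier, obstacle is the standard-generator bookkeeping underlying the isomorphism $\FK_{13}(\Cone_\phi)\oplus\FK_{1234}(\Cone_\phi)\to\FK_{123}(\Cone_\phi)$, where the asymmetry of $X_2$ (one genuine point $R_3$ flanked by two three-point pieces) makes the combinatorics noticeably less symmetric than in the $X_1$ case of Lemma~\ref{kegletrick2}.
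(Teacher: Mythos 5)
Your overall strategy is exactly the paper's: form the mapping cone of the equivariant map realising $(i_{34}^{134},r_{34}^{3},i_{34}^{234})$, match $\FK_Y(\Cone_\phi)$ with $\FK_Y(R_{12334})$, verify the condition $\FK_{13}\oplus\FK_{1234}\to\FK_{123}$ to identify $\Cone_\phi$ with $R_{12334}$ via the $X_2$-analogue of Lemma~\ref{thankyourasmus}, and then pin down the unit coefficients of $\bar\iota$ and $\bar\pi$ by evaluating well-chosen functors on the triangle. Your observation that it is $\FK_{34}$ (with $\FK_{34}(R_{34})\cong\Z$ and $\FK_{34}(R_{134})=\FK_{34}(R_3)=\FK_{34}(R_{234})=0$) rather than $\FK_4$ that detects $\bar\iota$ is correct, and your use of $\FK_1$ and $\FK_2$ for $f_{134}$ and $f_{234}$ matches the intended argument.

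The one genuine error is your claimed ``first obstacle'': the assertion that no single functor isolates $f_3$ is false. Take $Z=4$. One computes $R_{134}(4)=C_0\bigl([1,3,4]\setminus[1,3]\bigr)$ and $R_{234}(4)=C_0\bigl([2,3,4]\setminus[2,3]\bigr)$, both with vanishing $\K$-theory, while $R_3(4)=C_0\bigl((3,4)\bigr)$ gives $\FK_4(R_3)\cong\Z[1]$, and $\FK_4(R_{12334})\cong\Z[1]$ from the displayed table; moreover $R_{34}(4)=C_0\bigl((3,4]\bigr)$ has vanishing $\K$-theory, so the six-term sequence makes $\FK_4(\pi)$ an isomorphism $\Z[1]\to\Z[1]$ which equals $n_3\FK_4(\hat f_3)$, whence $n_3=\pm1$ directly. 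This is precisely what the paper's hint ``$\FK_4$ is used'' is pointing at (attached, admittedly, to the wrong coefficient in the paper's one-line proof sketch). Your proposed workaround through $\FK_{13}(\pi)\colon\Z\to\Z\oplus\Z$ is therefore unnecessary, and as stated it is also under-justified: you would still need to compute the kernel of $\FK_{13}(\phi)$ on standard generators and know that composition with $f_{134}$ acts by $\pm1$ on $\FK_{13}$ before you could ``subtract off'' its contribution. Replace that detour by the direct $\FK_4$ argument and the proof is complete and agrees with the paper's.
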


\begin{lemma} \label{gunnarsseqs2}
	There exist natural transformations $f_{134},f_{3},f_{234},f^{13},f^{1234},f^{23}$ 
	such that $\left<f_Y\right>=\NT(Y,12334)$ and $\left<f^Y\right>=\NT(12334,Y)$ and such that the sequences
\[ \xymatrix@!0@C=60pt@R=35pt{
\FK_{123}(A)\ar[rr]^-{f_{134} i_{4}^{134}\delta_{123}^4}|\circ & & \FK_{12334}(A)\ar[dl]^-{(f^{13}, f^{1234},f^{23})} \\
& \FK_{13}(A)\oplus\FK_{1234}(A)\oplus\FK_{23}(A)\ar[ul]^-{(i_{13}^{123},r_{1234}^{123},i_{23}^{123})} & 
} 
\] and \[
\xymatrix@!0@C=60pt@R=35pt{
\FK_{12334}(A)\ar[rr]^-{r_4^{34}\delta_{123}^4i_{23}^{123}f^{23}}|\circ & & \FK_{34}(A)\ar[dl]^-{(i_{34}^{134},r_{34}^3,i_{34}^{234})} \\
& \FK_{134}(A)\oplus\FK_{3}(A)\oplus\FK_{234}(A)\ar[ul]^-{(f_{134},f_{3},f_{234})} & 
} 
\]	
	are exact for all \csa s $A$ over $X_2$.
\begin{proof}
	This follows from Lemmas~\ref{kegletrick12} and~\ref{kegletrick22} by applying $\KK_*(X_2;-,A)$ and using that by the Diagram~\eqref{diagram2} the transformation
	$f_{134}i_4^{134}\delta_{123}^4$
	generates $\NT(123,12334)$
	and the transformation
	$r_4^{34}\delta_{123}^4i_{23}^{123}f^{23}$
	generates $\NT(12334,34)$.
\end{proof}
\end{lemma}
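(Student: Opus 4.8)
The plan is to read off both exact sequences by applying the contravariant functor $\KK_*(X_2;-,A)$ to the two exact triangles furnished by Lemmas~\ref{kegletrick12} and~\ref{kegletrick22}. Since $\kk(X_2)$ is triangulated, this functor is homological and hence converts each exact triangle into a long (cyclic, $\Z/2$-graded, six-term) exact sequence of abelian groups; because $\KK_*(X_2;R_Y,A)=\FK_Y(A)$ for every $Y\in\LC(X_2)^*$, these are exactly the two sequences in the statement. Moreover the three arrows of each sequence are induced by the three morphisms of the corresponding triangle, so they are precisely the natural transformations acting on $\FK(A)$ that those morphisms represent.

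First I would apply $\KK_*(X_2;-,A)$ to the triangle of Lemma~\ref{kegletrick12}. By that lemma $\phi$ represents $(i_{13}^{123},r_{1234}^{123},i_{23}^{123})$ and $\iota$ represents $(f^{13},f^{1234},f^{23})$, which are exactly the two labelled arrows of the first sequence; the remaining connecting arrow $\FK_{123}(A)\to\FK_{12334}(A)$ is the map induced by $\pi$. Likewise, applying $\KK_*(X_2;-,A)$ to the triangle of Lemma~\ref{kegletrick22} makes $\phi$ represent $(i_{34}^{134},r_{34}^3,i_{34}^{234})$ and $\pi$ represent $(f_{134},f_3,f_{234})$, with the connecting arrow $\FK_{12334}(A)\to\FK_{34}(A)$ induced by $\iota$. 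Exactness at each of the three vertices of either sequence is then automatic, and since the two triangles are fixed objects of $\kk(X_2)$, independent of $A$, this holds for every \csa\ $A$ over $X_2$.

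What remains---and the step I expect to be the only real obstacle---is to identify the two connecting maps with the explicit composites displayed in the statement, namely $f_{134}\,i_4^{134}\,\delta_{123}^4$ and $r_4^{34}\,\delta_{123}^4\,i_{23}^{123}\,f^{23}$. Here I would use that, by Lemma~\ref{kegletrick12}, the class of $\pi$ generates the infinite cyclic group $\NT(123,12334)$, and that tracing Diagram~\eqref{diagram2} shows $f_{134}\,i_4^{134}\,\delta_{123}^4$ also lies in and generates $\NT(123,12334)$; as this group is free of rank one, the two transformations agree up to a sign $\pm1$, which leaves exactness untouched. In the same way Lemma~\ref{kegletrick22} gives that the class of $\iota$ generates $\NT(12334,34)$, while Diagram~\eqref{diagram2} exhibits $r_4^{34}\,\delta_{123}^4\,i_{23}^{123}\,f^{23}$ as a generator of the same infinite cyclic group, so the connecting map and this composite again differ only by a sign. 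Substituting these explicit composites for the abstract connecting maps yields the two sequences exactly as stated. This mirrors the proof of Lemma~\ref{gunnarsseqs} for $X_1$, the only change being the bookkeeping forced by the different shape of Diagram~\eqref{diagram2}.
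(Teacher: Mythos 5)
Your proposal is correct and follows essentially the same route as the paper: apply the homological functor $\KK_*(X_2;-,A)$ to the two exact triangles of Lemmas~\ref{kegletrick12} and~\ref{kegletrick22}, and identify the connecting maps with the displayed composites by observing that both the classes of $\pi$ (resp.\ $\iota$) and the composites $f_{134}i_4^{134}\delta_{123}^4$ (resp.\ $r_4^{34}\delta_{123}^4i_{23}^{123}f^{23}$) generate the infinite cyclic groups $\NT(123,12334)$ (resp.\ $\NT(12334,34)$), so they agree up to a sign that does not affect exactness. The paper's proof is just a terser statement of exactly this argument.
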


\subsection{A classification result}

A slightly more general result, like the result in Section~\ref{class_first}, can be obtained, but we state a weaker result to ease notation.

\begin{proposition} \label{gunnarefren2}
	Let $A$ and $B$ be \csa s over $X_2$ and assume that $A$ and $B$ have real rank zero.
	Then any homomorphism $\phi\colon\FK(A)\to\FK(B)$ can be uniquely extended to a homomorphism $\phi'\colon\FK'(A)\to\FK'(B)$.
	Furthermore, if $\phi$ is an isomorphism then so is $\phi'$.
\begin{proof}
	The proof is similar to the proof of Theorem~\ref{gunnarefren}.  Since $A$ and $B$ have real rank zero, $\delta_{123}^4\colon\FK_{123}^0(A)\to\FK_4^1(A)$  and $\delta_{123}^4\colon\FK_{123}^0(B)\to\FK_4^1(B)$ vanish, so by Lemma~\ref{gunnarsseqs2} the horizontal sequences in the following diagram are exact 

\[ \xymatrix{
	0\ar[r] & \FK_{12334}^{1}(A)\ar[r]\ar@{..>}[d]^-{\phi_{12334}^{1}} & \FK_{13}^{1}(A)\oplus\FK_{1234}^{1}(A)\oplus\FK_{23}^{1}(A)\ar[r]\ar[d]^-{\phi_{13}^{1}\oplus\phi_{1234}^{1}\oplus\phi_{23}^{1}} & \FK_{123}^{1}(A)\ar[d]^-{\phi_{123}^{1}} \\
	0\ar[r] & \FK_{12334}^{1}(B)\ar[r] & \FK_{13}^{1}(B)\oplus\FK_{1234}^{1}(B)\oplus\FK_{23}^{1}(B)\ar[r] & \FK_{123}^{1}(B)
} \]
\[ \xymatrix{
	\FK_4^0(A)\ar[r]\ar[d]^-{\phi_4^0} & \FK_{134}^0(A)\oplus\FK_3^0(A)\oplus\FK_{234}^0(A)\ar[r]\ar[d]^-{\phi_{134}^0\oplus\phi_3^0\oplus\phi_{234}^0} & \FK_{12334}^0(A)\ar[r]\ar@{..>}[d]^-{\phi_{12334}^0} & 0 \\ 
		\FK_4^0(B)\ar[r] & \FK_{134}^0(B)\oplus\FK_3^0(B)\oplus\FK_{234}^0(B)\ar[r] & \FK_{12334}^0(B)\ar[r] & 0 
} \]
so we may recover $\FK_{12334}^1$ as the kernel of $(i_{13}^{123},r_{1234}^{123},i_{13}^{123})$ and $\FK_{12334}^0$ as the cokernel of $(i_{34}^{134},r_{34}^3,i_{34}^{234})$, as in the proof of Theorem~\ref{gunnarefren}.
\end{proof}
\end{proposition}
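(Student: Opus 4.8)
The plan is to follow the proof of Proposition~\ref{gunnarefren} almost verbatim, the role of the real rank zero hypothesis being to force the relevant boundary map to vanish and thereby collapse the two cyclic exact sequences of Lemma~\ref{gunnarsseqs2} into genuine short exact sequences. First I would invoke Observation~\ref{realrankzero}: since $A$ and $B$ have real rank zero, the boundary map $\delta_{123}^4$ vanishes on $\K_0$, that is, $\delta_{123}^4\colon\FK_{123}^0\to\FK_4^1$ is zero on both $A$ and $B$.

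Feeding this vanishing into Lemma~\ref{gunnarsseqs2}, I would split each cyclic sequence by degree. In the first sequence the incoming transformation $f_{134}i_4^{134}\delta_{123}^4$ reaches $\FK_{12334}^1$ only from $\FK_{123}^0$, where $\delta_{123}^4$ is zero; hence it vanishes in that degree and the six-term sequence degenerates to the exact sequence
\[ 0\to\FK_{12334}^{1}(A)\xrightarrow{(f^{13},f^{1234},f^{23})}\FK_{13}^{1}(A)\oplus\FK_{1234}^{1}(A)\oplus\FK_{23}^{1}(A)\xrightarrow{(i_{13}^{123},r_{1234}^{123},i_{23}^{123})}\FK_{123}^{1}(A), \]
which presents $\FK_{12334}^1$ as the kernel of $(i_{13}^{123},r_{1234}^{123},i_{23}^{123})$. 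Dually, in the second sequence the outgoing transformation $r_4^{34}\delta_{123}^4i_{23}^{123}f^{23}$ passes through $\delta_{123}^4$ in degree $0$ on $\FK_{12334}^0$ and therefore vanishes, so that sequence degenerates to
\[ \FK_4^{0}(A)\xrightarrow{(i_{34}^{134},r_{34}^3,i_{34}^{234})}\FK_{134}^{0}(A)\oplus\FK_3^{0}(A)\oplus\FK_{234}^{0}(A)\xrightarrow{(f_{134},f_3,f_{234})}\FK_{12334}^{0}(A)\to 0, \]
presenting $\FK_{12334}^0$ as the corresponding cokernel. The same two identifications hold for $B$.

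With these in hand I would define the missing component $\phi_{12334}$. Because $\phi$ is an $\NT$-morphism, the maps $\phi_{13},\phi_{1234},\phi_{23}$ intertwine $(i_{13}^{123},r_{1234}^{123},i_{23}^{123})$, so the middle vertical map restricts to a map between the kernels and determines $\phi_{12334}^1$ uniquely; dually $\phi_{134},\phi_3,\phi_{234}$ intertwine $(i_{34}^{134},r_{34}^3,i_{34}^{234})$, so the middle vertical map descends to the cokernels and determines $\phi_{12334}^0$ uniquely. By construction $\phi_{12334}^1$ respects the outgoing transformations $f^{13},f^{1234},f^{23}$ and $\phi_{12334}^0$ respects the incoming transformations $f_{134},f_3,f_{234}$. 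It then remains, exactly as in Proposition~\ref{gunnarefren}, to verify the two cross-compatibilities; using that $(f^{13},f^{1234},f^{23})$ is injective on $\FK_{12334}^1(B)$ and that $(f_{134},f_3,f_{234})$ is surjective onto $\FK_{12334}^0(A)$, these reduce to identities that $\phi$ already satisfies, because each composite $f^{Y}f_{Z}$ lies in the old ring $\NT$. Since $\NT'$ is generated by $\NT$ together with the $f_Y$ and $f^Y$, this shows that the extension $\phi'$ is an $\NT'$-morphism, and the defining diagrams leave no freedom, giving uniqueness. Finally, if $\phi$ is an isomorphism then every $\phi_Y$ is an isomorphism, so in each of the two sequences the outer vertical maps are isomorphisms; since an isomorphism of the outer terms induces an isomorphism of the kernel (respectively the cokernel), both $\phi_{12334}^1$ and $\phi_{12334}^0$ are isomorphisms, and hence so is $\phi'$.

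I expect the only genuine work to be the verification that $\phi_{12334}$ respects the new natural transformations in both degrees at once: the bookkeeping of which boundary map sits in which degree, and the reduction of the cross-terms to relations in $\NT$, is where the analogy with Proposition~\ref{gunnarefren} must be checked rather than merely asserted.
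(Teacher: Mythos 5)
Your proposal matches the paper's proof: the same vanishing of $\delta_{123}^4$ on $\K_0$ from real rank zero (via the Brown--Pedersen projection-lifting result), the same degeneration of the two cyclic sequences of Lemma~\ref{gunnarsseqs2} into a kernel presentation of $\FK_{12334}^1$ and a cokernel presentation of $\FK_{12334}^0$, and the same compatibility check carried over from Proposition~\ref{gunnarefren}. The paper's proof is just a terser version of exactly this argument.
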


\begin{corollary} \label{isoslift2}
	Let $A$ and $B$ be \csa s in the bootstrap class over $X_2$ and assume that $A$ has real rank zero.  Then any isomorphism between $\FK(A)$ and $\FK(B)$ lifts to a $\KK(X_2)$-equivalence.
\begin{proof}
	Since $\FK(A)$ and $\FK(B)$ are isomorphic, $\delta_{123}^4\colon\FK_{123}^0(B)\to\FK_4^1(B)$ vanishes, so the proof of Proposition~\ref{gunnarefren2} applies, hence the isomorphism lifts to an isomorphism between $\FK'(A)$ and $\FK'(B)$ and by~\cite[6.1.22]{rasmus} this lifts to a $\KK(X_2)$-equivalence.
\end{proof}
\end{corollary}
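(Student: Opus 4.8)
The plan is to mirror the argument used for $X_1$ in Corollary~\ref{isoslift1}: first upgrade the given $\FK$-isomorphism to an isomorphism of the refined invariant $\FK'$, and then invoke the UCT for $\FK'$ to lift it to a $\KK(X_2)$-equivalence. The starting point is the real rank zero hypothesis on $A$. By Observation~\ref{realrankzero}, which is built on the Brown--Pedersen criterion from~\cite[3.14]{brownpedersen}, a separable, nuclear, purely infinite, tight \csa{} over a finite $T_0$-space has real rank zero exactly when every boundary map vanishes on the relevant $\K_0$-group; in particular the boundary transformation $\delta_{123}^4\colon\FK_{123}^0(A)\to\FK_4^1(A)$ vanishes.

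The key observation is that this vanishing transfers automatically to $B$. Although the corollary assumes real rank zero only for $A$, the map $\delta_{123}^4$ is an element of the ring $\NT$ and hence commutes with every $\NT$-module isomorphism; the assumed isomorphism $\FK(A)\cong\FK(B)$ therefore forces $\delta_{123}^4\colon\FK_{123}^0(B)\to\FK_4^1(B)$ to vanish as well. This is precisely the input required by the argument of Proposition~\ref{gunnarefren2}. Even though that proposition is phrased under the hypothesis that both algebras have real rank zero, its proof uses only the vanishing of these two boundary maps, which we have now secured. I would therefore run the construction in that proof to extend the given isomorphism to an isomorphism $\phi'\colon\FK'(A)\to\FK'(B)$: the extra group $\FK_{12334}^1$ is recovered as the kernel of $(i_{13}^{123},r_{1234}^{123},i_{23}^{123})$ and $\FK_{12334}^0$ as the cokernel of $(i_{34}^{134},r_{34}^3,i_{34}^{234})$, using the exact sequences supplied by Lemma~\ref{gunnarsseqs2}.

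Finally, since $A$ lies in the bootstrap class $\B(X_2)$ and the refined invariant $\FK'$ admits a UCT by~\cite[6.1]{rasmus}, the isomorphism $\phi'$ lifts to a $\KK(X_2)$-equivalence by~\cite[6.1.22]{rasmus}. The step I expect to require the most care is the asymmetry in the hypotheses: the real rank zero assumption is placed only on $A$, whereas Proposition~\ref{gunnarefren2} as stated wants it for both algebras. Recognising that real rank zero enters the argument solely through the vanishing of $\delta_{123}^4$, which is $\NT$-natural and hence carried across the $\FK$-isomorphism, is what legitimises applying that proposition here; everything else is a matter of chasing the exact sequences of Lemma~\ref{gunnarsseqs2}.
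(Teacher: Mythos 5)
Your proposal is correct and follows essentially the same route as the paper: use real rank zero of $A$ and Observation~\ref{realrankzero} to kill $\delta_{123}^4$ on $\FK_{123}^0(A)$, transfer this vanishing to $B$ through the $\NT$-naturality of the assumed isomorphism, run the proof of Proposition~\ref{gunnarefren2} to extend to $\FK'$, and conclude with the UCT of~\cite[6.1.22]{rasmus}. Your explicit remark that only the vanishing of the boundary maps (not real rank zero of $B$ itself) is needed is exactly the point the paper's one-line proof relies on implicitly.
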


\begin{corollary} \label{classi2}
	Let $A$ and $B$ be separable, nuclear, purely infinite \csa s that are tight over $X_2$ and whose simple subquotients lie in the bootstrap class.  Assume that $A$ has real rank zero.
\begin{itemize}	
\item[(1)] If $A$ and $B$ are stable, then every isomorphism from $\FK(A)$ to $\FK(B)$ can be lifted to a \siso{} from $A$ to $B$. 

\item[(2)] If $A$ and $B$ are unital, then every isomorphism from $\FK(A)$ to $\FK(B)$ that preserves the unit can be lifted to a \siso{} from $A$ to $B$.
\end{itemize} 
\end{corollary}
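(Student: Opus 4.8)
The plan is to deduce this corollary by combining the lifting result of Corollary~\ref{isoslift2} with Kirchberg's classification theorem and the stable-to-unital passage of Restorff and Ruiz, in exactly the way indicated for $X_1$ in Corollary~\ref{classi1}. The essential mathematical content has already been isolated in Proposition~\ref{gunnarefren2} and Corollary~\ref{isoslift2}; what remains is to feed these into the external classification machinery and to keep track of which map is being realised.

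First, for part~(1), I would note that since $A$ and $B$ are nuclear with all simple subquotients in the bootstrap class of Rosenberg and Schochet, \cite[4.13]{meyernest_boot} places both of them in the bootstrap class $\B(X_2)$. As $A$ has real rank zero, Corollary~\ref{isoslift2} applies and lifts the given isomorphism $\FK(A)\to\FK(B)$ to a $\KK(X_2)$-equivalence. Since $A$ and $B$ are moreover separable, stable, purely infinite and tight over $X_2$, so that $\Prim(A)$ and $\Prim(B)$ are homeomorphic to $X_2$, Kirchberg's theorem \cite[4.3]{kirchberg} lifts this $\KK(X_2)$-equivalence further to an $X_2$-equivariant \siso{} $A\to B$.

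The point that requires care, and which I regard as the main obstacle, is to check that the \siso{} produced induces the \emph{prescribed} isomorphism on $\FK$, not merely some isomorphism. Here I would track functoriality through the chain of constructions: Proposition~\ref{gunnarefren2} extends the given $\phi$ uniquely to an isomorphism $\phi'\colon\FK'(A)\to\FK'(B)$; the UCT for $\FK'$ from \cite[6.1.22]{rasmus} supplies a $\KK(X_2)$-equivalence whose image under the natural map $\KK_*(X_2;A,B)\to\Hom_{\NT'}(\FK'(A),\FK'(B))$ is exactly $\phi'$; and Kirchberg's lifting preserves the $\KK(X_2)$-class, hence the induced $\FK'$-morphism. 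Restricting the latter back along $\FK\hookrightarrow\FK'$ recovers $\phi$ by uniqueness of the extension, so the constructed \siso{} induces $\phi$, as required.

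For part~(2), I would reduce to the stable case just established. Part~(1) says precisely that $\FK$ strongly classifies the stable, separable, nuclear, purely infinite \csa s that are tight over $X_2$ and whose simple subquotients lie in the bootstrap class. Applying \cite[2.1]{restorffruiz}, together with \cite[3.2]{restorffruiz} to handle the bookkeeping of the unit class, this strong stable classification upgrades to a classification of the corresponding unital algebras by $\FK$ together with the class $[1_A]\in\FK_X(A)$ of the unit. Consequently, a unit-preserving isomorphism $\FK(A)\to\FK(B)$ lifts to a unit-preserving \siso{} $A\to B$, which completes the proof. I expect no new difficulty beyond verifying that the hypotheses of \cite[2.1]{restorffruiz} (proper infiniteness and the appropriate form of the invariant) are met and that the position of the unit in $\FK_X$ is correctly matched.
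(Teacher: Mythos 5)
Your proposal is correct and follows exactly the route the paper intends: Corollary~\ref{isoslift2} lifts the $\FK$-isomorphism to a $\KK(X_2)$-equivalence, Kirchberg's theorem \cite[4.3]{kirchberg} realises it as a \siso{} in the stable case, and \cite[2.1, 3.2]{restorffruiz} handles the unital case. Your additional care in tracking that the resulting \siso{} induces the prescribed isomorphism is a welcome elaboration of a point the paper leaves implicit, but it is not a different argument.
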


\begin{remark}\label{Xtwoop}
The space $X_5=X_2^{\op}$ has been studied in~\cite{rasmusmanuel}
where it is shown that the indecomposable transformations for $X_2^\op$ are
\[ \xymatrix@!0@C=40pt@R=35pt{
& 23\ar[r]^-{i}\ar[dr]^-{r} & 234\ar[dr]^-{r} && 1\ar[dr]^-{i} & \\
123\ar[ur]^-{r}\ar[r]^-{i}\ar[dr]^-{r} & 1234\ar[ur]^-{r}\ar[dr]^-{r} & 3\ar[r]^-{i} & 34\ar[ur]^-{\delta}|-\circ\ar[r]^-{r}\ar[dr]^-{\delta}|-\circ & 4\ar[r]^-{\delta}|-\circ & 123 \\
& 13\ar[ur]^-{r}\ar[r]^-{i} & 134\ar[ur]^-{r} & & 2\ar[ur]^-{i} & .
} \]
As with $X_1^{\op}$, cf.~Remark~\ref{Xoneop}, it is straightforward to copy the methods of Meyer and Nest in~\cite{filtrated} to construct a refined filtrated $\K$-theory for which there is a UCT; for $X_2^{\op}$ the extra representing object is the mapping cone of a generator of $\NT(134,23)$.
And as with $X_1^{\op}$, the methods we used for the spaces $X_1$ and $X_2$ apply to $X_2^{\op}$ since the boundary maps $\delta$ are placed in similar places in the structure diagrams for $\NT$ of $X_2^{\op}$.
\end{remark}

\section{A third counterexample} \label{secthm2}

Consider the space $X_3=\{1,2,3,4\}$ with $\Op(X_3)=\{\emptyset,4,24,34,234,X_3\}$.  Then $1<2$, $1<3$, $2<4$, $3<4$, $\LC(X_3)^*=\{4,24,34,234,1234,123,12,13,1,2,3\}$ and its Hasse diagram is
\[ \xymatrix@!0@C=20pt@R=20pt{
& 1 & \\
2\ar[ur] && 3\ar[ul] \\
& 4\ar[ul]\ar[ur] & .
} \]
The indecomposable transformations in the category $\NT$ have been studied in detail in~\cite[6.2.2]{rasmus} and are displayed in the following diagram:
\[ \xymatrix@!0@C=40pt@R=35pt{
 & 12\ar[r]|\circ^-{\delta}\ar[dr]^-{r} & 34\ar[dr]^-{i} &  & 3\ar[dr]^-{i} &  \\
123\ar[r]|\circ^-{\delta}\ar[ur]^-{r}\ar[dr]^-{r} & 4\ar[ur]^-{i}\ar[dr]^-{i} & 1\ar[r]|\circ^-{\delta} & 234\ar[r]^-{i}\ar[ur]^-{r}\ar[dr]^-{r} & 1234\ar[r]^-{r} & 123 \\
 & 13\ar[r]|\circ^-{\delta}\ar[ur]^-{r} & 24\ar[ur]^-{i} &  & 2\ar[ur]^-{i} & 
} \]

The methods used for the spaces $X_1$ and $X_2$ do not apply to $X_3$ since the boundary maps $\delta$ are placed radically differently in the structure diagram for $\NT$ of $X_3$.
In fact, for this space $X_3$ there does exist tight, nuclear, separable, purely infinite \csa s $A$ and $B$ over $X_3$ of real rank zero that are not $\KK(X_3)$-equivalent but have isomorphic filtrated $\K$-theory.  

\begin{proof}[Proof of Theorem~\ref{thm2}]
The construction is similar to the one of R.~Meyer and R.~Nest in~\cite[p.~27ff]{filtrated} and some of the details are carried out in~\cite[6.2.4]{rasmus}.
Put $P_Y(Z)=\NT(Y,Z)$.  Consider the injective map $j\colon P_{234}\to P_{24}\oplus P_1[1]\oplus P_{34}$ induced by three generators of the groups $\NT(24,234)$, $\NT(1,234)$ and $\NT(34,234)$, and let $M$ denote the cokernel.  Let $k\geq 2$ and put $M_k=\tensor M{\Z/k}$.  Then $M_k$ is
\[ \xymatrix@!0@C=55pt@R=35pt{
 & 0\ar[r]|\circ\ar[dr] & \Z/k\ar[dr] &  & \Z/k\ar[dr] &  \\
\Z/k\ar[r]|\circ\ar[ur]\ar[dr] & 0\ar[ur]\ar[dr] & \Z/k[1]\ar[r]|\circ & (\Z/k)^2\ar[r]\ar[ur]\ar[dr] & \Z/k\ar[r] & \Z/k \\
 & 0\ar[r]|\circ\ar[ur] & \Z/k\ar[ur] &  & \Z/k\ar[ur] & 
} \]
and has projective dimension 2, and
\[ \xymatrix@C-5pt{
0\ar[r] & P_{234}\ar[r] & P_{234}\oplus P_{24}\oplus P_1[1]\oplus P_{34} \ar[r] & P_{24}\oplus P_1[1]\oplus P_{34} \ar[r] & M_k \ar[r] & 0
} \]
is a projective resolution of $M_k$.
Notice that the boundary maps from even to odd parts in $M_k$ vanish.
There exists in the bootstrap class over $X_3$ a \csa\ $A_k$ with $\FK(A_k)=M_k$, see~\cite[6.2.4]{rasmus} for details.  Let
\[ \xymatrix@C-5pt{
Q_2\ar[r] & Q_1 \ar[r] & Q_0 \ar[r] & A_k
} \]
be a  $\ker\FK$-projective resolution which is a lift of the above projective resolution of $M_k$, and let
\[ \xymatrix@-12pt{
A_k\ar@{=}[r] & N_0 \ar[rr] && N_1\ar[dl]|\circ \ar[rr] && N_2\ar[dl]|\circ \ar[rr] && N_3\ar[dl]|\circ \ar@{=}[rr] && N_3\ar[dl]|\circ \ar@{=}[rr] && \cdots \\
&& P_0\ar[ul] && P_1\ar[ul]\ar[ll] && P_2\ar[ul]\ar[ll] && 0\ar[ul]\ar[ll] && \cdots\ar[ll] &&
} \]
be its phantom tower.  Then $N_2\cong_{\KK(X_3)} Q_2$ and the composite map $A_k\to N_2$ lies in $(\ker\FK)^2$.
Construct $B$ as the mapping cone of $A_k\to N_2$.
Then $B$ and $A_k\oplus SN_2$ are not $\KK(X_3)$-equivalent but have $\FK(B)\cong\FK(A_k)\oplus\FK(N_2)[1] = M_k\oplus P_{234}[1]$.
See~\cite[4.10, 5.5]{filtrated} for more details. 

Since all $\KK(X_3)$-equivalence classes in the bootstrap class over $X_3$ can be represented by tight, stable, nuclear, separable, purely infinite \csa s over $X_3$, cf.~\cite[4.6]{filtrated}, we can find such $C$ and $D$ with $C\cong_{\KK(X_3)}B$, $D\cong_{\KK(X_3)}A_k\oplus SN_2$ and $\FK(C)\cong\FK(D)\cong\FK(B)$.  Since $P_{234}[1]$ is
\[ \xymatrix@!0@C=40pt@R=35pt{
 & \Z[1]\ar[r]|\circ\ar[dr] & 0\ar[dr] &  & \Z[1]\ar[dr] &  \\
\Z[1]^2\ar[r]|\circ\ar[ur]\ar[dr] & \Z\ar[ur]\ar[dr] & 0\ar[r]|\circ & \Z[1]\ar[r]\ar[ur]\ar[dr] & \Z[1]\ar[r] & \Z[1]^2 \\
 & \Z[1]\ar[r]|\circ\ar[ur] & 0\ar[ur] &  & \Z[1]\ar[ur] & ,
} \]
we see that the boundary maps from even to odd parts in $\FK(B)$ vanish, so $C$ and $D$ will be of real rank zero as their simple subquotients are Kirchberg algebras and therefore of real rank zero, cf.~Observation~\ref{realrankzero}.
\end{proof}

\begin{remark}
The real rank zero counter-examples for the space $X_3$ have torsion in both even and odd degrees.
In~\cite{abk}, it is shown that for real rank zero \csa s over $X_3$ with free $\K_1$-groups, isomorphisms on a reduced filtrated $\K$-theory lift to $\KK(X_3)$-equivalences.
This reduced filtrated $\K$-theory is defined in~\cite{abk} by disregarding parts of the information in filtrated $\K$-theory, and it is equivalent to the reduced filtered $\K$-theory defined by the second named author in~\cite[4.1]{gunnar}. It is unknown whether isomorphisms on $\FK$ lift to $\KK(X_3)$-equivalences under these conditions. 
\end{remark}

\begin{remark}
The space $X_6$ has been studied in~\cite{rasmus} where R.~Bentmann fails to construct a finite refinement of filtrated $\K$-theory over $X_6$ that admits a UCT and remarks that it seems unlikely that such a finite refinement exists.  So our method cannot be applied for the space $X_6$.  In~\cite{rasmus}, R.~Bentmann constructs separable, stable, nuclear, purely infinite, tight \csa s $A$ and $B$ over $X_6$ that have isomorphic filtrated $\K$-theory and are not $\KK(X_6)$-equivalent.  One can check that the boundary map $\FK_1(A)\to\FK_3(A)$ does not vanish in either degrees, so neither $A$ and $B$ nor the suspensions $\Sus A$ and $\Sus B$ have real rank zero.  So there is so far no known real rank zero counter-example for $X_6$.
\end{remark}

\section{Acknowledgement}
This research was supported by the NordForsk Research Network ``Operator Algebras and Dynamics'' (grant \#11580), by the Faroese Research Council, and by the Danish National Research Foundation (DNRF) through the Centre for Symmetry and Deformation.

The second and third named authors are grateful to Professor S{\o}ren Eilers and the department of mathematics at the University of Copenhagen for providing the dynamic research environment where this work was initiated during the spring of 2010.
The first named author would like to thank Professor Ralf Meyer and the department of mathematics at Georg-August-Universit\"at G\"ottingen for kind hospitality during the spring and early summer of 2010.

%


\bibliography{referencer}

\end{document}